\documentclass[a4paper,12pt]{article}

\usepackage{amsthm,amsmath,stmaryrd,bbm,hyperref,geometry,color,authblk}
\usepackage[utf8]{inputenc}
\usepackage{amssymb}
\usepackage[english]{babel}
\usepackage{graphicx}
\usepackage{amsfonts,amssymb}
\usepackage{verbatim}
\usepackage{enumitem}
\usepackage[mathscr]{euscript}

\newcommand{\po}{\left(}
\newcommand{\pf}{\right)}
\newcommand{\co}{\left[}
\newcommand{\cf}{\right]}

\newcommand{\R}{\mathbb R} 
\newcommand{\T}{\mathbb T} 
 
\newcommand{\N}{\mathbb N}

\newcommand{\dd}{\text{d}}
\newcommand{\FF}{\mathcal F}
\newcommand{\bX}{\mathbf{X}}
\newcommand{\bV}{\mathbf{V}}
\newcommand{\bv}{\mathbf{v}}
\newcommand{\bx}{\mathbf{x}}

\newcommand{\bY}{\mathbf{Y}}

\newcommand{\COT}{\mathscr{C}_{OT}}
\newcommand{\Heps}{\mathscr{H}_\varepsilon}

\newcommand{\na}{\nabla}
\newcommand{\1}{\mathbbm{1}}

\newtheorem{theorem}{Theorem}
\newtheorem{assumption}{Assumption}
\newtheorem{lemma}[theorem]{Lemma}

\newtheorem{proposition}[theorem]{Proposition}
\newtheorem{remark}{Remark}
\newtheorem{example}{Example}

\title{Nesterov acceleration for the Wasserstein  minimization of displacement-convex free energies}
\author{Pierre Monmarché}

\begin{document}
\maketitle

\begin{abstract}
We show that the mean-field underdamped Langevin process (associated to the non-linear Vlasov-Fokker-Planck equation) achieves a Nesterov acceleration with respect to the Wasserstein gradient flow of a displacement-convex free energy, in the sense that it converges at a rate of order given by the square-root of the Polyak-Łojasiewicz constant of the free energy (which is the optimal convergence rate for the corresponding gradient flow). This result has been made possible by the recent breakthrough \cite{Lu} by Jianfeng Lu, which establishes  such a \emph{diffusive-to-ballistic} improvement in terms of entropy in the linear case.
\end{abstract}

\section{Introduction and result}

\paragraph{Momentum and acceleration.} The term \emph{Nesterov acceleration}, named after \cite{nesterov1983method}, refers to the situation where the convergence rate of a convex optimization scheme which uses some momentum/inertia is quadratically improved with respect to the standard gradient descent. This phenomenon had already been noticed by Polyak in \cite{polyak1964some}. Both Nesterov's and Polyak schemes can be interpreted as discretizations of the damped Hamiltonian (a.k.a. heavy ball) dynamics
\begin{equation}
\label{loc:heavyball}
\left\{\begin{array}{rcl}
\dot x &= & v \\
\dot v & =& - \na f(x) - \gamma v\,,
\end{array}\right.
\end{equation}
with $f\in\mathcal C^1(\R^d,\R)$ the objective function and $\gamma>0$ a friction parameter, see \cite{shi2022understanding,su2016differential}. At the continuous-time level, the acceleration can be described as the fact that the long-time convergence rate of the solution of~\eqref{loc:heavyball} to $\mathrm{argmin} f$ is of order $\sqrt{\lambda}$ where $\lambda$ is the optimal convergence rate of the gradient descent
\[\dot x = -\na f(x)\,.\]
This is for instance simple to see when $f(x) = |A x - b|^2$ for some matrix $A$ and vector $b$, in which case the problem boils down to an eigenvalue computation. If we normalize the maximal eigenvalue of $A^T A$ to be one (which amounts to normalize time, having in mind that, with a time discretization, the step-size will be constrained by this largest eigenvalue), $\lambda$ is given by the smallest eigenvalue of $AA^T$, hence its condition number. When the matrix is badly conditioned (i.e. the problem is highly anisotropic/multiscale), $\lambda$ is very small, and thus $\sqrt{\lambda} \gg \lambda$.

 This continuous-time statement can then usually be transfered to the discrete-time practical algorithms, up to a discretization error analysis and possibly a time change. This is still an active topic in optimization; we refer the interested reader to e.g.  \cite{shi2022understanding,su2016differential,wilson2021lyapunov,even2021continuized} and references within for a relatively recent panorama. In the present work we will only discuss the continuous-time problem.
 
 \paragraph{PL inequalities.} Assume that $f$ is normalized so that $\inf f= 0$. In order to establish an acceleration, we have to take the \emph{optimal} convergence rate of the gradient flow as reference. A convenient notion of optimality here is to consider the largest $\lambda>0$ such that 
 \begin{equation}
 \label{loc:PLft}
 \forall x_0  \in \R^d,t\geqslant 0,\qquad f(x_t) \leqslant e^{-2\lambda t} f(x_0) ,
 \end{equation}
 which by expanding this inequality at $t=0$ or by differentiating $f(x_t)$ over time can be seen to be equivalent to
 \begin{equation}
 \label{eq:PLf}
 \forall x\in\R^d,\qquad f(x)  \leqslant \frac{1}{2\lambda}|\na f(x)|^2\,.
 \end{equation}
 This is called a Polyak-Łojasiewicz (PL) inequality with constant $\lambda$. It clearly implies that all critical points of $f$ are global minimizers. Assume that $f$ admits a unique minimizer $x_*$. It is well-known that~\eqref{eq:PLf} then implies that
 \begin{equation}
 \label{loc:PLdivfree}
 \forall x\in\R^d,\qquad |x-x_*|^2 \leqslant \frac{2}{\lambda} f(x), 
 \end{equation}
 see e.g. \cite[Equation (6)]{monmarche2025local}, which is another form of Łojasiewicz inequality~\cite{law1965ensembles} (sometimes referred to as a quadratic growth condition~\cite{karimi2016linear}). Notice that, combined with~\eqref{loc:PLft}, it shows that $\lambda$ is indeed a convergence rate for $|x_t-x_*|$, since
 \[|x_t-x_*|\leqslant \sqrt{2f(x_t)/\lambda} \leqslant \sqrt{2f(x_0)/\lambda} e^{-\lambda t}\,.\] 
 In fact, when $f$ is convex,
 \begin{equation}
 \label{loc:fxx*}
 f(x) \leqslant |x-x_*||\na f(x)|\,, 
 \end{equation}
 which means that the derivative-free inequality~\eqref{loc:PLdivfree} implies the PL one~\eqref{eq:PLf} with constant $\lambda/4$ instead of $\lambda$. If $f$ is even strongly convex,~\eqref{loc:fxx*} can be improved to \cite[Equation (1.11)]{CMCV} and the constant deduced for~\eqref{eq:PLf} is improved, the ideal case being the quadratic situation $f(x) = \frac{c}{2}|x|^2$ where~\eqref{loc:PLdivfree} and~\eqref{eq:PLf} are perfectly equivalent with the same optimal constant $\lambda=c$.
 
 In the present work, we will use as a reference the optimal constant $\lambda$ such that~\eqref{loc:PLdivfree} holds, and we will refer to it as the PL constant of $f$. Since we are interested in the convex case, it is in $[\lambda',4\lambda']$ with $\lambda'$ the optimal constant for~\eqref{eq:PLf} (which is perhaps more classically considered when analyzing gradient flows).

\paragraph{Wasserstein optimization.} The discussion above was presented in $\R^d$ for clarity, however in this work we consider optimization problems over the Wasserstein space $\mathcal P_2(\R^d)$ of probability measures with finite second moment. Given $\mathcal E:\mathcal P_2(\R^d) \rightarrow (-\infty,\infty]$, referred to as an energy, we are interested in the question of minimizing the entropic-regularized objective
\begin{equation}
\label{loc:freeenergy}
\mathcal F(\rho) = \mathcal E(\rho) + \mathcal H(\rho)\,,
\end{equation}
referred to as a free energy, where $\mathcal H(\rho) $ is Boltzmann's entropy, equal to $ \int_{\R^d} \rho \ln \rho$ if $\rho$ has a Lebesgue density (also denoted $\rho$) and $+\infty$ otherwise.

Such optimization problems arise in a number of situations. Some important models of interacting particles in statistical physics are known to converge to minimizers of such free energies as the size of the population goes to infinity, as in \cite{bashiri2020gradient,sandier2004gamma,bunne2022proximal}, the entropy term arising from thermal fluctuations or other stochastic effects at the microscopic level (in \eqref{loc:freeenergy}, the temperature has been scaled to $1$). In order to explain their performances, many high-dimensional algorithms have been shown to converge as the dimension goes to infinity to the gradient flow of a suitable functional over $\mathcal P_2(\R^d)$ \cite{Szpruch,mei2018mean,geshkovski2025mathematical}. Alternatively, in variational inference, the objectives are directly stated in term of an optimization problem in a space of probability measures \cite{arbel2019maximum,lambert2022variational}. For optimization algorithms (either directly in $\mathcal P_2(\R^d)$ or arising as a high dimensional limit), the entropy term may either be a penalization added to ensure some regularity and that minimizers have a density~\cite{chizat,peyre2015entropic}, or it can be a toy model for the noise arising from the use of stochastic approximations in the practical algorithms \cite{menon2026implicit}. Finally, the linear case where $\mathcal E(\rho) = \int_{\R^d} V \rho$ for some potential $V:\R^d\rightarrow \R$ corresponds to the problem of sampling the Gibbs measure proportional to $e^{-V}$ (since it is the unique minimizer of $\mathcal F(\rho)$), and some enhanced sampling method can also be interpreted as optimization schemes of a free energy associated to some non-linear energy \cite{lelievre2026convergence}.

Since we are interested in establishing a Nesterov acceleration, we are concerned with convex cases. There are several notions of convexity in $\mathcal P_2(\R^d)$. The one involved in our analysis is that of displacement-convexity:  $\mathcal E$ is said to be displacement-convex if it is convex along $\mathcal W_2$-geodic curves, i.e. $t\mapsto \mathcal E(\rho_t)$ is convex whenever $t\mapsto \rho_t$ is a geodesic for the $\mathcal W_2$ distance~\eqref{def:W2}. For instance, the entropy $\mathcal H$ is displacement-convex, and $\mathcal E(\rho)=\int_{\R^d} V \rho$ is displacement convex if and only if $V$ is convex. See \cite{ambrosio2005gradient,CMCV} and Example~\ref{exemple} for these facts and further details.

We consider exactly the same conditions in this Wasserstein optimization problem as in the finite-dimensional situation discussed above, namely:

\begin{assumption}\label{assum:Convexetal}
The energy $\mathcal E$ is such that : 
\begin{enumerate}
\item $\mathcal E $ is displacement-convex. 
\item The free energy $\mathcal F$ admits a global minimizer $\rho_*$, which is unique. Without loss of generality, $\FF(\rho_*) = 0$.
\item The PL constant  
\begin{equation}
\label{eq:lambdaPL}
\lambda_* := \inf\left\{\frac{2\FF(\rho)}{\mathcal W_2^2(\rho,\rho_*)},\ \rho\in\mathcal P_2(\R^d),\ \rho\neq \rho_*\right\}
\end{equation}
is positive.
\end{enumerate}
\end{assumption}

In particular, under Assumption~\ref{assum:Convexetal}, $\FF$ is also displacement-convex. The analogue of the PL inequality~\eqref{eq:PLf} reads
\begin{equation}\label{loc:PLW2}
\forall \rho\in\mathcal P_2(\rho)\,,\qquad \FF(\rho) \leqslant \frac{1}{2\lambda} \mathcal I(\rho),\qquad \mathcal I(\rho) = \int_{\R^d} \left|\na \ln \rho + D\mathcal E(\rho,\cdot)\right|^2 \rho, 
\end{equation}
where $D\mathcal E$ the intrinsic derivative of $\mathcal E$ is defined in~\eqref{def:DE}, and it implies that $\lambda_*\geqslant \lambda$. In general non-displacement-convex cases, this stronger form of PL inequality is more classically considered, see e.g. \cite{chizat,monmarche2025local,Pavliotis}, and it is the optimal constant $\lambda$ such that 
\begin{equation}
\label{loc:decayGrad}
\forall m_0 \in \mathcal P_2(\R^d),\ t\geqslant 0,\qquad \FF(m_t) \leqslant e^{-2\lambda t} \FF(m_0)
\end{equation}
 along the gradient flow (see~\eqref{eq:GradientFlow} below), cf. \cite{Pavliotis,monmarche2025local}.  Moreover, like~\eqref{eq:PLf}, it implies that all critical points of $\FF$ (i.e. solutions of $\mathcal I(\rho)=0$) are global minimizers. Notice that, by the analogue in this situation of~\eqref{loc:fxx*}, which is  called the HWI inequality (see e.g.~\cite[Equation (1.15)]{CMCV}), under Assumption~\ref{assum:Convexetal},
\[\forall \rho \in \mathcal P_2(\R^d),\qquad \FF(\rho) \leqslant \mathcal W_2(\rho,\rho_*) \sqrt{\mathcal I(\rho)}\]
and thus~\eqref{loc:PLW2}  holds with optimal constant $\lambda\in[ \lambda_*/4,\lambda_*]$. In particular, under Assumption~\ref{assum:Convexetal}, $\rho_*$ is the only critical point of $\mathcal F$.

Apart from Assumption~\ref{assum:Convexetal}, we will also require some regularity conditions, stated as Assumption~\ref{assum:regularite} below.

\paragraph{Gradient flow and kinetic counterpart.} The gradient flow associated to $\FF$, defined within the general theory of gradient flows in metric spaces (see \cite{ambrosio2005gradient} for a general exposure) is given by the parabolic equation
\begin{equation}
\label{eq:GradientFlow}
\partial_t m_t = \na \cdot \po D\mathcal E(m_t,\cdot) m_t \pf + \Delta m_t\,.
\end{equation}
 Its stationary solutions are exactly the critical points of $\FF$ and thus, under Assumption~\ref{assum:Convexetal}, $\rho_*$ is the only one. As discussed above, the largest constant $\lambda$ possible for the decay~\eqref{loc:decayGrad} is smaller than~$\lambda_*$ in~\eqref{eq:lambdaPL}. As a consequence, a Nesterov acceleration is obtained if a process with momentum converges at a rate of order $\sqrt{\lambda_*}$.

We consider the  natural kinetic analogue of~\eqref{eq:GradientFlow}, which is the Vlasov-Fokker-Planck equation over $\mathcal P(\R^d\times\R^d)$:
\begin{equation}
\label{eq:VFP}
\partial_t \nu_t + v\cdot \na_x \nu_t - D\mathcal E(\rho_t,x) \cdot \na_v \nu_t =   \gamma \co \na_v\cdot\po      v  \nu_t \pf +   \Delta_v \nu_t\cf \,,\qquad \rho_t := \int_{\R^d} \nu_t(\cdot,v)\dd v\,,
\end{equation}
with $\gamma>0$ called the friction parameter. The solution $\nu_t(x,v)$ is interpreted as a density of particles at position $x$ and velocity $v$. As $\gamma\rightarrow \infty$ (overdamped regime), $\rho_{\gamma t}$ converges to a solution of~\eqref{eq:GradientFlow}, see e.g. \cite{lelievre2016partial}.  A measure $\hat \nu_* \in \mathcal P(\R^{2d})$ is a stationary solution of~\eqref{eq:VFP} if and only if $\hat \nu_* =\hat \rho_* \otimes \kappa$ with $\hat \rho_*$ a stationary solution of~\eqref{eq:GradientFlow} and $\kappa(v) \propto e^{-\frac12|v|^2}$ the standard Gaussian distribution over $\R^d$. This justifies the use of~\eqref{eq:VFP} in order to minimize $\FF$. In particular, under Assumption~\ref{assum:Convexetal}, the unique stationary solution of~\eqref{eq:VFP} is 
\begin{equation}
\label{eq:nu*}
\nu_* := \rho_* \otimes \kappa \,.
\end{equation}
We introduce the kinetic free energy
\begin{equation}
\label{eq:Fk}
\end{equation}
\[\mathcal F_k(\nu) = \mathcal E(\rho)  + \frac12\int_{\R^d} |v|^2\nu(x,v)\dd x\dd v + \mathcal H(\nu)  + \frac{d}{2}\ln(2\pi)\,.  \]
Then $t\mapsto \FF_k(\nu_t)$ is non-increasing (see e.g. \cite{monmarche2025local}) and, under Assumption~\ref{assum:Convexetal}, $\nu_*$ is the global minimizer of $\FF_k$ (which has been normalized so that $\FF_k(\nu_*)=0$).

The fact that~\eqref{eq:VFP} involves some momentum by contrast to~\eqref{eq:GradientFlow} can be understood with a Lagrangian description of these equations. Indeed,~\eqref{eq:GradientFlow} can be interpreted as the Kolmogorov equation satisfied by the law of the (non-linear) overdamped Langevin process solving
\begin{equation}\label{loc:ZtMcK}
\dd Z_t = -D\mathcal E(m_t,Z_t) \dd t + \sqrt{2}\dd B_t,\qquad m_t =  Law(Z_t)\,,
\end{equation}
where $B$ is a Brownian motion, while~\eqref{eq:VFP} corresponds to a (non-linear, kinetic) Langevin diffusion process
\begin{equation}\label{loc:XVMcK}
\left\{\begin{array}{rcl}
\dd X_t &= & V_t\dd t \\
\dd V_t &=& -D\mathcal E(\rho_t,X_t) \dd t - \gamma V_t \dd t + \sqrt{2\gamma}\dd B_t \qquad \rho_t =  Law(X_t)\,.
\end{array}\right.
\end{equation}

 Momentum-based accelerated flows over the Wasserstein space have been considered previously~\cite{carrillo2019convergence,JMLR:v26:23-1288,wang2022accelerated}. However, these works do not  specifically  consider objective functions with an entropic part, and if we apply their algorithms with objective function $\mathcal F$ we do not get~\eqref{eq:VFP}. For instance, the Wasserstein version of the heavy ball algorithm, studied in~\cite{JMLR:v26:23-1288}, reads
 \begin{equation}
 \label{loc:HBJMLR}
\partial_t \tilde \nu_t + v\cdot \na_x \tilde  \nu_t - D\mathcal F(\tilde \rho_t,x) \cdot \na_v \tilde \nu_t =  \gamma   \na_v\cdot\po      v  \tilde  \nu_t \pf \,, \qquad \tilde \rho_t := \int_{\R^d} \tilde \nu_t(\cdot,v)\dd v\,.
 \end{equation}
 Contrary to~\eqref{loc:XVMcK}, there is no simple stochastic interpretation of this equation. In practice, to implement a particle approximation of this, the term $D\mathcal H(\tilde \rho_t,x) = \na_x \ln \tilde \rho_t(x)$ has to be approximated (e.g. with kernel estimation or by a neural network as in score-based diffusion algorithms).

More generally, due to the  term $\Delta_v \nu_t$, it doesn't seem that~\eqref{eq:VFP} can be seen as a Hamiltonian flow over probability measures (in the sense of \cite[Definition 4]{JMLR:v26:23-1288}) for a suitable time-dependent Hamiltonian.  Integrating~\eqref{eq:VFP} over $v$ leads to the continuity equation
\begin{equation}
\label{loc:continuitEQ}
\partial_t \rho_t + \na\cdot \po \rho_t w_t \pf = 0
\end{equation}
with the velocity field $w_t$ given by the conditional average of the velocities:
\[w_t(x) = \frac{1}{\rho_t(x)}\int_{\R^d} v \nu_t(x,v)\dd v\,.\] 
Within Otto's  calculus over $\mathcal P_2(\R^d)$ \cite{otto2001geometry}, \eqref{loc:continuitEQ} can be understood as the first line of~\eqref{loc:heavyball}, but the interpretation of an analogue of the second line is less clear than the situations in~\cite{carrillo2019convergence,JMLR:v26:23-1288,wang2022accelerated}.

However, there is an alternative way to see~\eqref{eq:VFP} as a Wasserstein analogue to~\eqref{loc:heavyball}, by interpreting the latter as the juxtaposition of the Hamiltonian dynamics and, playing the role of a damping, the (partial) gradient descent (with learning rate $\gamma$)
\begin{equation*}
\left\{\begin{array}{rcl}
\dot x &= & \na_v H(x,v) \\
\dot v & =& -\na_x H(x,v)\,,
\end{array}\right.\qquad \dot v = - \gamma \na_v H(x,v)
\end{equation*}
with Hamiltonian $H(x,v) = f(x)+\frac{1}{2}|v|^2$, exactly as~\eqref{eq:VFP} is the juxtaposition of the Hamiltonian flow (as in \cite[Definition 4]{JMLR:v26:23-1288})
\begin{equation}
\label{loc:Hamilt}
\partial_t \nu_t + \na_x \cdot \po \nu_t \na_v \frac{\delta \mathcal F_k}{\delta m}(\nu_t,\cdot) \pf - \na_v \cdot \po \nu_t \na_x \frac{\delta \mathcal F_k}{\delta m}(\nu_t,\cdot) \pf =0
\end{equation}
and partial gradient flow (as in~\eqref{eq:GradientFlow})
\begin{equation}
\label{loc:damping}
\partial_t \nu_t =  \gamma \na_v \cdot \po \nu_t \na_v \frac{\delta \mathcal F_k}{\delta m}(\nu_t,\cdot) \pf\,,
\end{equation}
with both equations having for Hamiltonian the kinetic free energy~\eqref{eq:Fk}. Notice that, with this interpretation, we could juxtapose the Hamiltonian flow~\eqref{loc:Hamilt} with a (partial) gradient flow (in $v$) of $\mathcal F_k$ associated to another metric than $\mathcal W_2$, such as Fisher-Rao, Kalman-Wasserstein or Stein metrics (see \cite{wang2022accelerated} and references within). In the presence of an entropy (or similar but more general internal energy, see~\cite{ambrosio2005gradient}), this leads to kinetic particles with the same stochastic mechanism as their overdamped counterparts (such as the Brownian noise in~\eqref{loc:ZtMcK} and~\eqref{loc:XVMcK}, birth-and-death processes for the Fisher-Rao metric, etc.), which differ from the processes considered e.g. in \cite{wang2022accelerated}  (exactly as the heavy ball~\eqref{loc:HBJMLR} differs from the Vlasov-Fokker-Planck equation~\eqref{eq:VFP}). A way to see the difference between the point of view explained here and~\cite{wang2022accelerated} is that, in the extended Hamiltonian $\FF_k$, with respect to the initial objective $\FF$, we did not only add the kinetic energy, but also took the whole entropy of the joint density, from which the entropy terms vanish in the Hamiltonian flow~\eqref{loc:Hamilt} and appear in the damping part~\eqref{loc:damping}.

\medskip

Comparing the convergence rates of the two continuous-time equations~\eqref{eq:GradientFlow} and~\eqref{eq:VFP} hides an arbitrary choice of time normalization. Indeed, we could accelerate time by an arbitrary factor and completely change the comparison of the convergence rates. In fact, eventually, comparing the algorithms  makes sense without ambiguity only when specific discrete-time numerical schemes are chosen. However, comparing~\eqref{eq:GradientFlow} and~\eqref{eq:VFP} at the continuous-time level is reasonable because the term $D\mathcal E(\rho_t,X_t)$ appears with the same coefficient in both stochastic differential equations, which means that the impact of its Lipschitz constant on the choice of the discretization step should be the same in both cases. Moreover, we will see that, to get an acceleration, we should take $\gamma$ of order $\sqrt{\lambda_*}$, which means that the friction/dissipation part of~\eqref{loc:XVMcK} should not have an arbitrarily bad effect on time discretization when $\lambda_*$ is small (which is the regime we are interested in) and that the trace of the diffusion matrices (i.e. the ``amount of randomness per unit time") is at most of the same order in~\eqref{loc:XVMcK} as in~\eqref{loc:ZtMcK}. See also Remark~\ref{rem:discrete} about time discretization.

\paragraph{Main result and strategy.}  We can now state the main result of this work. Under Assumption~\ref{assum:regularite}, the Vlasov-Fokker-Planck equation~\eqref{eq:VFP} has a solution for any $\nu_0 \in \mathcal P_2(\R^{2d})$.  
\begin{theorem}
\label{thm:main}
Let $\Gamma>0$. Under Assumptions~\ref{assum:Convexetal} and~\ref{assum:regularite}, for any initial condition $\nu_0 \in \mathcal P_2(\R^{2d})$, a solution of~\eqref{eq:VFP} with friction $\gamma = \Gamma\sqrt{\lambda_*} $ satisfies
\begin{equation}
\label{eq:thm_main}
\forall t\geqslant 0,\qquad  \FF_k(\nu_t) \leqslant \frac{1+\theta}{1-\theta} \exp \po -  \frac{\theta \sqrt{\lambda_*}}{2(1+\theta)} t  \pf \FF_k(\nu_0)\,,
\end{equation}
for any $0<\theta \leqslant \min\{\frac{\Gamma}{12},\frac1{4\Gamma}\}$.
\end{theorem}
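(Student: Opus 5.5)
The plan is to reduce the non-linear problem to the linear one treated in \cite{Lu}, using displacement convexity to linearize the energy along the flow. Given the solution $(\nu_t)$, I introduce the time-dependent potential $V_t(x)$ with $\na V_t = D\mathcal E(\rho_t,\cdot)$. I also consider the associated linearized kinetic free energy
\[\mathcal G_t(\nu) = \int V_t \rho + \frac12\int |v|^2\nu + \mathcal H(\nu) + \tfrac d2\ln(2\pi)\,.\]
By displacement convexity, $\mathcal E(\rho) \geqslant \mathcal E(\rho_t) + \int \na V_t(x)\cdot (T(x)-x)\rho_t(\dd x)$ whenever $T$ is the optimal map from $\rho_t$ to $\rho$. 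So $\FF_k(\nu_t)-\FF_k(\nu_*)$ can be controlled from above by first-order quantities computed at the current state. This is the analogue of the elementary inequality $f(x)-f(x_*)\leqslant \na f(x)\cdot(x-x_*)$, which underlies the Lyapunov proofs of acceleration for~\eqref{loc:heavyball}.

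The core of the argument is a Lyapunov functional in the spirit of Lu's diffusive-to-ballistic estimate. I would take
\[\mathcal L(\nu) = \FF_k(\nu) + \theta\sqrt{\lambda_*}\,\mathcal C(\nu)\,,\]
where $\mathcal C(\nu)$ is a cross term coupling position and velocity. A natural candidate is $\mathcal C(\nu)=\int (x - T_\nu(x))\cdot w(x)\rho(\dd x)$, with $T_\nu$ the optimal transport map from $\rho$ to $\rho_*$ and $w$ the conditional mean velocity. This is the Wasserstein counterpart of the classical $\langle x-x_*,v\rangle$ term. First I would show $|\mathcal C|\leqslant \frac{1}{\sqrt{\lambda_*}}\FF_k$ up to constants: by Cauchy--Schwarz, $\mathcal W_2^2(\rho,\rho_*)\leqslant 2\FF(\rho)/\lambda_*$, and $\int|w|^2\rho\leqslant \int|v|^2\nu$ is controlled by the kinetic part of $\FF_k$ via the Gaussian log-Sobolev / entropy decomposition $\FF_k(\nu)\geqslant \FF(\rho)+ \mathcal H(\nu|\rho\otimes\kappa)$. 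This equivalence $(1-\theta)\FF_k\leqslant \mathcal L\leqslant(1+\theta)\FF_k$ produces the prefactor $\frac{1+\theta}{1-\theta}$.

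Next I would differentiate along~\eqref{eq:VFP}. The energy part gives $\frac{\dd}{\dd t}\FF_k = -\gamma\int |\na_v \ln(\nu_t/(\rho_t\otimes\kappa))|^2\nu_t$-type dissipation, which acts only in velocity. The cross term contributes three pieces. There is $-\int \na V_t\cdot (x-T)\rho$, which by the convexity inequality above is $\leqslant -\FF(\rho_t)$ plus entropy corrections; the entropy part is handled since $\mathcal H$ is itself displacement convex and $\na\ln\rho$ enters through the momentum equation. There is a term $+\int |w|^2\rho$-like coming from transport. Finally there are friction terms $-\gamma\,\mathcal C$ and error terms from the time derivative of $T_{\rho_t}$. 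With $\gamma=\Gamma\sqrt{\lambda_*}$ and $\theta\leqslant\min\{\Gamma/12,1/(4\Gamma)\}$, Young's inequality should absorb the bad terms into $\gamma$-dissipation and $\theta\sqrt{\lambda_*}\FF$. The result would be $\frac{\dd}{\dd t}\mathcal L\leqslant -\frac{\theta\sqrt{\lambda_*}}{2}\FF_k\leqslant -\frac{\theta\sqrt{\lambda_*}}{2(1+\theta)}\mathcal L$, and Grönwall concludes.

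The main obstacle is the cross term. The velocity dissipation only controls the deviation of $\nu$ from local Maxwellians $\rho\otimes\kappa$, not the mean velocity field $w$ alone, and the derivative of the optimal map $T_{\rho_t}$ in time is not regular. If the direct cross-term approach fails, I would instead follow Lu's strategy: use the space-time Poincaré/Lions-type inequality (averaging over a time window of length of order $1/\sqrt{\lambda_*}$), with the Wasserstein PL constant $\lambda_*$ replacing the Poincaré constant. Frozen-in-time displacement convexity would control the non-linear drift, via the linearization step of the first paragraph. This would give contraction of $\FF_k$ over each window by a fixed factor, iterated to yield~\eqref{eq:thm_main}. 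Regularity (Assumption~\ref{assum:regularite}) would be used to justify the differentiations.
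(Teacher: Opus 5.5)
You are attempting the direct mean-field route that the paper only mentions as ``probably possible'': your functional $\FF_k+\theta\sqrt{\lambda_*}\,\mathcal C$ is exactly the mean-field analogue of Lu's modified entropy $\Heps$, with your $\mathcal C$ playing the role of $\COT$ and $\rho_*$ in place of $\rho_*^N$. The paper does not take this route. It applies Lu's linear estimate \eqref{loc:Lu2} to the $N$-particle system, whose potential $U_N$ is convex by Lemma~\ref{lem:UNconvex}. It replaces the Talagrand inequality for $\rho_*^N$ by the approximate one of Proposition~\ref{prop:approxTalagrand}, which is where $\lambda_*$ enters. It then lets $N\to\infty$, $r\to0$ and $n\to\infty$.

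Your route is plausible, but as written it has a gap at its core. The bound $\frac{\dd}{\dd t}\mathcal C\leqslant c\,\mathcal I_v-\FF(\rho_t)-\gamma\,\mathcal C$ is only asserted (``Young's inequality should absorb the bad terms''), and Young's inequality cannot give it. Differentiating gives $\frac{\dd}{\dd t}\mathcal C=\int v\cdot(I-\nabla T_t(x))v\,\nu_t(\dd x,\dd v)-\int D\mathcal E(\rho_t,x)\cdot(x-T_t(x))\rho_t(\dd x)-\gamma\,\mathcal C-\int w_t\cdot\partial_tT_t\,\rho_t$. Your tangent inequality bounds the second term by $-\FF(\rho_t)+\int\ln\det\nabla T_t\,\rho_t$; the ``entropy correction'' comes from McCann's change of variables. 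Producing $\nabla\ln\rho_t$ from the first term, as you suggest, requires splitting the conditional second moment $M_t(x)$ of the velocities as $I+(M_t-I)$, which leaves $\int\mathrm{tr}\big((I-\nabla T_t)(M_t-I)\big)\rho_t$. This term, the entropy correction and the moving-map term all pair the unbounded Hessian $\nabla T_t$ of the Brenier potential with sign-indefinite quantities. So no Cauchy--Schwarz or Young argument bounds them by $\mathcal I_v$. The mean-velocity worry you raise is not the issue, since $\int|w|^2\rho\leqslant 2\mathcal H(\nu|\rho\otimes\kappa)\leqslant\mathcal I_v$.

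These terms must be handled structurally, which is the new content of \cite{Lu}. One way that works, at least formally, is the following. Time-monotonicity of optimal couplings, $\mathbb E[(X_t-X_s)\cdot(T_t(X_t)-T_s(X_s))]\geqslant0$ with $\dot X_t=w_t(X_t)$, gives $-\int w_t\cdot\partial_tT_t\,\rho_t\leqslant\int w_t\cdot\nabla T_t\,w_t\,\rho_t$. Then keep $M_t$ unsplit and use the pointwise bound $\ln\det A-\mathrm{tr}(A\Sigma)\leqslant-d-\ln\det\Sigma$, where $\Sigma_t=M_t-w_tw_t^T$ is the conditional covariance. Everything collapses to $\frac{\dd}{\dd t}\mathcal C\leqslant 2\mathcal H(\nu_t|\rho_t\otimes\kappa)-\FF(\rho_t)-\gamma\,\mathcal C$, after which your Young/Grönwall bookkeeping does yield \eqref{eq:thm_main}. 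Justifying this along a non-linear flow is itself nontrivial: you need time-differentiability of $t\mapsto T_t$ and control of the singular parts of $\nabla T_t$. The paper avoids this by using Lu's regular solutions of the linear $N$-particle equation.

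Your fallback does not close the gap. The space-time Poincaré method is the $L^2$ approach of \cite{albritton2024variational}, not Lu's, and the introduction explains why $L^2$ methods do not transfer to this non-linear setting. Window-averaging would also not give the explicit constants in \eqref{eq:thm_main}.

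Two smaller points. First, for $(1-\theta)\FF_k\leqslant\mathcal L\leqslant(1+\theta)\FF_k$, the bound $\int|w|^2\rho\leqslant\int|v|^2\nu$ is useless because its right side tends to $d$. You need the fibrewise Gaussian Talagrand bound above, together with $\FF_k(\nu)=\FF(\rho)+\mathcal H(\nu|\rho\otimes\kappa)$ and $\mathcal W_2^2(\rho,\rho_*)\leqslant2\FF(\rho)/\lambda_*$. Second, Assumption~\ref{assum:regularite} gives no regularity of $\mathcal E$. It only supplies approximating energies $\mathcal E_n$, whose PL constants need not converge to $\lambda_*$, so a limiting argument as in Step 5 (using $\lambda(t)$ from Remark~\ref{rem:lambda(t)}) is still required.
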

In practice, the velocity can be sampled at equilibrium, meaning that $\nu_0 = \rho_0 \otimes \kappa$ for some $\rho_0 \in \mathcal P_2(\R^d)$, in which case $\FF_k(\nu_0)=\FF(\rho_0)$. In this situation, since moreover $\mathcal F_k(\nu_t)\geqslant \mathcal F(\rho_t)$ by subadditivity of the relative entropy (see e.g. \cite[Lemma 18]{GuillinWuZhang}), we get 
\[\forall t\geqslant 0,\qquad  \FF(\rho_t) \leqslant \frac{1+\theta}{1-\theta} \exp \po -  \frac{\theta \sqrt{\lambda_*}}{2(1+\theta)} t  \pf \FF(\rho_0)\,.
\]
In particular, a Nesterov acceleration is achieved with respect to~\eqref{loc:decayGrad}. An accelerated rate is deduced for $\mathcal W_2(\rho_t,\rho_*) \leqslant \sqrt{2\mathcal F(\rho_t)/\lambda_*}$.

In the linear case where $\mathcal E(\rho) = \int_{\R^d} V\rho$, Theorem~\ref{thm:main} boils down to the main result of~\cite{Lu}. In other words, our work is an extension of Jianfeng Lu's work to non-linear settings. To go from the linear settings to the non-linear ones, we use the same method as in \cite{MONMARCHE20171721,guillin2021uniform}, which is to apply the linear result to a system of $N$ mean-field interacting particles and obtain the result by propagation of chaos as $N\rightarrow \infty$ (as in \cite{Pavliotis,GuillinWuZhang} for the gradient flow~\eqref{eq:GradientFlow}). The works \cite{MONMARCHE20171721,guillin2021uniform}, which are not restricted to convex cases, are based on Villani's modified entropy method for the linear Langevin process~\cite{Villani}. As such, they do not provide sufficiently sharp convergence rate in the convex case to establish an acceleration.

Even when applying the result of Lu to the $N$ particle system, there remains one difficulty, which is that we want the convergence rate in~\eqref{eq:thm_main} to be given in terms of the PL constant $\lambda_*$ in~\eqref{eq:lambdaPL}. However, applying the result of~\cite{Lu} to the $N$ particle system and letting $N\rightarrow \infty$ yields~\eqref{eq:thm_main} with $\lambda_*$ replaced by
\[\tilde \lambda_* = \limsup_{N\rightarrow \infty} \lambda_N \]
where $\lambda_N$ is the log-Sobolev constant of the $N$-particle Gibbs measure, see~\eqref{eq:LSI} and \eqref{eq:GIbbs}. It is known that $\tilde \lambda_* \leqslant \lambda_*$, see \cite{Pavliotis,GuillinWuZhang}. The equality $\tilde \lambda_* = \lambda_*$ is conjectured in \cite{Pavliotis}, but as of today it is not even known whether $\tilde \lambda_*>0$ in general if $\lambda_*>0$. See \cite{Dagallier,SongboLSI,MonmarcheLSI,wang2025large,MonmarcheToyModelMF} for some recent developments on this question.

We are not going to prove the conjecture of~\cite{Pavliotis}. Instead, rather than applying directly the result of~\cite{Lu}, we will follow the proof and, whenever the log-Sobolev inequality is applied, exploiting the fact that it is not applied to an arbitrary $\nu \in \mathcal P_2(\R^{2dN})$ but to the law at time $t$ of the particle system with initial condition $\nu_0^{\otimes N}$ for which propagation of chaos holds, we will replace it by an approximate inequality. In fact, in \cite{Lu}, the log-Sobolev inequality is only used through the Talagrand inequality it implies (corresponding to using~\eqref{loc:PLdivfree} deduced from~\eqref{eq:PLf}). In Proposition~\ref{prop:approxTalagrand}, we prove an approximate Talagrand inequality for the $N$-particle Gibbs measure which involves the mean-field PL constant $\lambda_*$ from~\eqref{eq:lambdaPL} and additional error terms that will eventually vanish as $N\rightarrow \infty$.

Let us notice that, after \cite{MONMARCHE20171721,guillin2021uniform}, it was shown in \cite{Songbo} that Villani's modified entropy method could be implemented directly at the level of the mean-field non-linear equation, without going through the $N$ particle approximation. This allows to exploit directly the non-linear PL inequality instead of assuming the (a priori stronger) uniform-in-$N$ log-Sobolev inequality. 
It is probably possible to do this also for the arguments in~\cite{Lu}, which would give an alternative (arguably more self-contained) proof of Theorem~\ref{thm:main}. 

\begin{remark}\label{rem:discrete}
In the settings of Theorem~\ref{thm:main}, by the PL inequality and the Wasserstein-to-free energy regularization result from \cite[Theorem 3(12)]{guillin2021uniform} (see also Proposition 19 combined with Equation~(79) in \cite{monmarche2025local} for general settings, \cite{guillin2021uniform} being only stated for pairwise interaction energies),~\eqref{eq:thm_main} implies
\begin{equation}\label{loc:congract}
\forall t\geqslant 0,\qquad  \mathcal W_2(\nu_t,\nu_*) \leqslant C' \exp \po -  \frac{\theta \sqrt{\lambda_*}}{2(1+\theta)} t  \pf \mathcal W_2(\nu_0,\nu_*)
\end{equation}
for some explicit $C'>0$. The interest of this with respect to~\eqref{eq:thm_main} is that, by triangular inequality, we can bound
\[\mathcal W_2(\tilde \nu_{t+s} ,\nu_*) \leqslant \mathcal W_2(\tilde \nu_{t+s},\Phi_s(\nu_t)) + \mathcal W_2(\Phi_s(\nu_t),\nu_*)\,,\]
with $\Phi_s(\nu)$ the solution of~\eqref{eq:VFP} at time $s$ initialized at $\nu$ and $\tilde \nu_t$ a numerical scheme (e.g. the empirical distribution of a time discretization of an interacting particle system). It then suffices to combine~\eqref{loc:congract} with a finite-time error analysis bounding $\mathcal W_2(\tilde \nu_{t+s},\Phi_s(\nu_t))$ uniformly over $t\geqslant 0$ and $s\in[0,T]$ for some $T>0$ to get a uniform bound on $\mathcal W_2(\tilde \nu_{t+s} ,\nu_*) $ with a contraction rate of the same order as~\eqref{loc:congract}, as in \cite{durmus2024asymptotic,10.3150/19-BEJ1178}. See \cite{altschuler2025shifted} for a more recent and finer development of this strategy.

\end{remark}

\paragraph{Previous works on the linear case.} To conclude this introduction, let us discuss the literature which has been  concerned with kinetic acceleration for sampling, corresponding to the linear case where $\mathcal E(\rho)=\int_{\R^d}V\rho$, and in particular  the recent developments of this active field. A seminal work on this topic is the analysis of Diaconis, Holmes and Neal in~\cite{diaconis2000analysis} of a persistent walk to sample the uniform measure over the discrete torus $\T_N=\mathbb Z/N\mathbb Z$. It provided a mathematical evidence of a so-called \emph{diffusive to ballistic} acceleration (which had already been noticed in physics): in $k$ steps, the symmetric random walk over $\T_N$ typically moves at a distance of order $\sqrt{k}$ (as long as $k=\mathcal O(N)$) from its starting point. This is referred to as a diffusive behavior, and it means that $N^2$ iterations are required to cover the typical distances in $\T_N$, which are of order $N$, causing the mixing time to be order $N^2$. By contrast, making the velocity of the persistent walk switching between $-1$ and $1$ at a rate of order $1/N$, the chain follows its initial velocities for times of order $N$, hence on average moves at a distance of order $k$ with $k$ iterations (corresponding to a so-called ballistic behavior) as long as $k=\mathcal O(N)$. In other words, the inertia lasts over a time-scale sufficient to cover typical distances over $\T_N$ with a ballistic behavior, eventually yielding a mixing time of order $N$.

Taking scaling limits  (as $N\rightarrow \infty$)  of variations of the persistent walks led to the introduction of piecewise-deterministic velocity jump samplers \cite{PetersdeWith,M24,BierkensRoberts}, which have since then been the topic of many developments, see  e.g. \cite{10.1214/20-AAP1659,M26,andrieu2021hypocoercivity} for further references and some convergence rates in general situations. Apart from toy models in \cite{gadat2013spectral,Monmarche2013}, the first convergence rates obtained for this class of kinetic processes were not sharp enough to describe a diffusive to ballistic acceleration.

The same goes for the Langevin diffusion~\eqref{loc:XVMcK} (with $D\mathcal E(\rho_t,\cdot)=\na V$ in the linear case) and related generalized Hamiltonian Monte Carlo methods \cite{idealized,M41} and associated discrete-time feasible schemes, where the proofs of an acceleration first remained restricted to Gaussian targets, for which basically everything boils down to linear algebra (see the rich bibliography in \cite[Section 1.3]{altschuler2025shifted}). In particular, in the Gaussian case, the interpretation of the acceleration in terms of ballistic behavior lasting up to the time-scale of typical distances, as in the persistent walk, is still clear, see the last discussion in \cite{M41}.

An important progress was made in the general convex settings when the  space-time Poincaré inequality method was introduced in~\cite{albritton2024variational}. Indeed, this launched a series of work \cite{cao2023explicit,lu2022explicit,EberleLift,Brigati} based on this method where sharp convergence rates were obtained for kinetic processes  (including lower bounds showing that the acceleration cannot be better than quadratic for kinetic processes, see \cite{EberleLift}). These rates are proven for continuous-time processes and are stated in terms of $L^2$ norm (a.k.a. $\chi^2$ divergence). Very recently, it was shown in~\cite{fan2026sharp} that these sharp rates can in fact also be obtained with the earlier  Dolbeault-Mouhot-Schmeiser (DMS) modified $L^2$ norm approach~\cite{dolbeault2015hypocoercivity}. However, in the mean-field situation, by contrast with the relative entropy, the $L^2$ norm does not scale well with the number of particles, so that our approach to establish Theorem~\ref{thm:main} does not apply. Even when working directly with the non-linear equation, the $L^2$ approach, which can be seen as a linearization of the entropic situation for perturbations of the stationary solution, only applies to small non-linear perturbations of the linear equations or small perturbations of a stationary solution, as in \cite{herau2007short,bouin2026quantitative}.

Contrary to the $L^2$ case, before \cite{Lu}, the only available approach to get entropic convergence rates for the underdamped Langevin process had been Villani's modified entropy method~\cite{Villani} and some variations \cite{baudoin2017bakry,cattiaux2019entropic,Songbo2}. With this approach, in the convex case, sharp rates  were only known for Gaussian targets \cite{M18}.

As a conclusion, inside this active field, by successfully  designing the entropic analogue of the DMS approach (which was an important open problem),  the work~\cite{Lu} of Jianfeng Lu constitutes an important breakthrough.

\section{Proofs}

\subsection{Settings, definitions and notations}

When $\mu \in \mathcal P_2(\R^d)$ has a density with respect to the Lebesgue measure we still write $\mu$ this density, and we write $\mu \propto h$ if it is proportional to  a function $h$. The $\mathcal W_2$ distance over $\mathcal P_2(\R^d)$ is defined by 
\begin{equation}
\label{def:W2}
\mathcal W_2^2(\nu,\mu) = \inf_{\pi\in \mathcal C(\nu,\mu)} \int_{\R^{2d}} |x-y|^2 \pi (\dd x,\dd y)
\end{equation}
where $\mathcal C(\nu,\mu)$ is the set of measures $\pi$ over $\R^d\times\R^d$ having marginal distributions $\nu$ and $\mu$. We denote
\[M_2(\mu) = \int_{\R^d} |x|^2 \mu(\dd x)\]
 the second moment of $\mu\in\mathcal P_2(\R^d)$. For $\mu,\nu \in \mathcal P_2(\R^d)$, the relative entropy and Fisher information of $\nu$ with respect to $\mu$ are respectively defined as
 \[\mathcal H(\nu|\mu) = \int_{\R^d} (h \ln h - h + 1) \mu \,,\qquad \mathcal I(\nu|\mu) = 4 \int_{\R^d} |\na \sqrt{h}|^2 \mu\]
 if $\nu = h\mu$ with $h\in L^1(\mu)$ (these quantities are then well-defined, possibly infinite, since $x\mapsto x\ln x - x +1$ is non-negative and convex over $\R_+$, and $|\na \sqrt{h}|$ is understood as the upper-gradient $\lim_{r\rightarrow 0}\sup_{y:|y-x|\leqslant r}|h(x)-h(y)|/r$), and otherwise as $+\infty$. A measure $\mu \in \mathcal P_2(\R^d)$ is said to satisfy a log-Sobolev inequality with constant $\lambda>0$ with
 \begin{equation}
 \label{eq:LSI}
 \forall \nu \in\mathcal P_2(\R^d),\qquad \mathcal H(\nu|\mu) \leqslant \frac{1}{2\lambda} \mathcal I(\nu|\mu)
 \end{equation}
 and a Talagrand inequality with constant $\lambda$ if 
 \begin{equation}
 \label{eq:Talagrand}
 \forall \nu \in\mathcal P_2(\R^d),\qquad \mathcal W_2^2(\nu,\mu) \leqslant \frac{2}{\lambda} \mathcal H(\nu|\mu)\,.
 \end{equation}
 
When it exists, the linear functional derivative $\frac{\delta \mathcal E}{\delta m}(\rho,\cdot):\R^d\rightarrow \R$ of $\mathcal E:\mathcal P_2(\R^d) \rightarrow (-\infty,\infty]$, defined with \cite[Definition 5.43]{carmona2018probabilistic}, is a function such that
\begin{equation}
\label{eqdef:dE}
\mathcal E(\rho_1) - \mathcal E(\rho_0) = \int_0^1 \int_{\R^d} \frac{\delta \mathcal E}{\delta m}((1-s)\rho_0+s\rho_1,x) (\rho_1-\rho_0)(\dd x) \dd s  
\end{equation}
for all $\rho_0,\rho_1 \in \mathcal P_2(\R^d)$ with $\mathcal E(\rho_0)+\mathcal{E}(\rho_1)<\infty$ (this uniquely defines $\frac{\delta \mathcal E}{\delta m}(\rho,\cdot)$ up to an additive constant). When $x\mapsto \frac{\delta \mathcal E}{\delta m}(\rho,x)$ is $\mathcal C^1$ for any $\rho \in \mathcal P_2(\R^d)$, we write
\begin{equation}
\label{def:DE}
D\mathcal E(\rho,x) = \na_x \frac{\delta\mathcal E}{\delta m}(\rho,x)\,,
\end{equation}
known as the intrinsic derivative of $\mathcal E$.  For a given $x\in\R^d$, $(\rho,y)\mapsto D^2\mathcal E(\rho,x,y)$ is the intrinsic derivative of $\rho \mapsto D\mathcal E(\rho,x)$ (provided it exists), and similarly for higher order derivatives.

We can now state the regularity conditions  used in the proof of Theorem~\ref{thm:main}.

\begin{assumption}\label{assum:regularite_bis}
The energy $\mathcal E $ is  lower-bounded. The intrinsic derivative  $D\mathcal E$ exists and is $\mathcal C^1$ and Lipschitz continuous in its second variable. For all $n\geqslant 2$, $D^n\mathcal E$ exists, is continuous and bounded. For any $R>0$  there exists a constant $K(R)>0$ such that for all $\rho\in\mathcal P_2(\R^d)$ with $M_2(\rho) \leqslant R$,
\begin{equation}
\label{eq:M2'}
\forall \rho'\in\mathcal P_2(\R^d),\qquad \left|\int_{\R^{2d}} \co \frac{\delta^2\mathcal E}{\delta m^2}(\rho',x,x)-\frac{\delta^2\mathcal E}{\delta m^2}(\rho',x,y) \cf \rho(\dd x)\rho(\dd y)\right| \leqslant K(R)\,. 
\end{equation}
\end{assumption}

The Vlasov-Fokker-Planck equation~\eqref{eq:VFP} is well-posed under Assumption~\ref{assum:regularite_bis} for any initial condition $\nu_0 \in \mathcal P_2(\R^d)$, and solutions instantaneously become smooth with finite free energy, see further details and references in the proof of Theorem~\ref{thm:main}.

\begin{example}\label{exemple}
If $\mathcal E(\rho) = \int_{\R^d} V(x) \rho(\dd x) + \int_{\R^{kd}} W(x_1,\dots,x_k) \rho^{\otimes k}(\dd x_1,\dots\dd x_k)$ for some $k\geqslant 2$ then Assumption~\ref{assum:regularite_bis} holds if $V,W$ are lower bounded, $\mathcal C^\infty$ with all derivatives of order $2$ or larger bounded. Moreover, if $V$ and $W$ are convex, then $\mathcal E$ is displacement-convex (see \cite[Section 9.2]{ambrosio2005gradient}).
\end{example}

In fact, since no quantitative bound on the derivatives in Assumption~\ref{assum:regularite_bis}  appears in Theorem~\ref{thm:main}, this condition can be weakened by an approximation argument. Recall that a sequence of functions $(f_n)_{n\in\N}$ from a topological space $E$ to $(-\infty,\infty]$ is said to $\Gamma$-converge to another function $f$ if:
\begin{itemize}
\item For any sequence $(x_n)_{n\in \N}$ in $E$ converging to $x\in E$, $\liminf f_n(x_n) \geqslant f(x)$.
\item For any $x\in E$, there exists a sequence $(x_n)_{n\in \N}$ in $E$ converging to $x$ with $\lim f_n(x_n) =f(x)$.
\end{itemize}
See \cite{ambrosio2005gradient} for further details, properties and applications of this notion.

\begin{assumption}\label{assum:regularite}
There exists a sequence of energies $(\mathcal E_n)_{n\in\N}$ such that:
\begin{itemize}
\item For each $n\in\N$,  $\mathcal E_n$ satisfies Assumptions~\ref{assum:Convexetal} and~\eqref{assum:regularite_bis}.
\item  As $n\rightarrow \infty$,  $\mathcal E_n$ $\Gamma$-converges to $\mathcal E$.
\item For any $\nu_0 \in \mathcal P_2(\R^{2d})$ there exists a sequence $(\nu_0^n)_{n\in\N}$ over $\mathcal P_2(\R^{2d})$ and a trajectory $t\mapsto \nu_t$ in $\mathcal C(\R_+,\mathcal P_2(\R^{2d}))$   such that, as $n\rightarrow \infty$, $\mathcal E_n(\nu_0^n) + \mathcal H(\nu_0^n) \rightarrow \mathcal E(\nu_0) + \mathcal H(\nu_0)$ and  $\nu_t^n$ (the solution of~\eqref{eq:VFP} associated to $\mathcal E_n$) converge for all $t\geqslant 0$ in $\mathcal W_2$ to $\nu_t$. We say that  $(\nu_t)_{t\geqslant 0}$ is a solution of~\eqref{eq:VFP}  (associated to $\mathcal E$) with initial condition $\nu_0$.
\end{itemize}
\end{assumption}

\subsection{The particle system}

For a particle configuration $\bx=(x_1,\dots,x_N)\in(\R^d)^N$ (we will systematically use bold letters for vectors involving $N$ particles), the corresponding empirical distribution and mean-field potential are respectively defined as
\begin{equation}
\pi_{\bx} = \frac1N \sum_{i=1}^N \delta_{x_i},\qquad U_N(\bx) = N \mathcal E\po \pi_{\bx}\pf \,.
\end{equation}
The Langevin process associated to the mean-field potential $U_N$ is the particle system $(\bX_t,\bV_t)_{t\geqslant 0}$ on $(\R^d\times\R^d)^{N}$ solving
\begin{equation}
\label{eq:Langevin}
\left\{
\begin{array}{rcl}
\dd \bX_t &=  & \bV_t \dd t \\
\dd \bV_t & =& -\na U_N(\bX_t) \dd t - \gamma \bV_t\dd t + \sqrt{2\gamma} \dd \mathbf{B}_t\,.
\end{array}
\right.
\end{equation}
We write $\nu_t^N $ the law of $(\bX_t,\bV_t)$, which solves the kinetic Fokker-Planck equation
\begin{equation}
\label{eq:kinFP}
\partial_t \nu_t^N + \bv\cdot \na_{\bx} \nu_t^N - \na U_N(\bx)\cdot \na_{\bv} \nu_t^N = \gamma \co \na_{\bv} \cdot\po   \bv  \nu_t^N \pf +\Delta_{\bv} \nu_t^N\cf\,,
\end{equation}
and $\rho_t^N = \int_{\R^{dN}} \nu_t^N(\cdot,\dd \bv)$ the marginal law of $\bX_t$. 

Under Assumption~\ref{assum:regularite_bis}, $U_N\in\mathcal C^2(\R^{dN},\R)$ with
\[\na_{x_i} U_N(\bx) = D\mathcal E(\pi_{\bx},x_i)\,,\qquad \na^2_{x_i,x_j} U_N(\bx) = \frac1N D^2 \mathcal E(\pi_{\bx},x_i,x_j) + \na D\mathcal E(\pi_{\bx},x_i) \1_{i=j} \]
(which can be deduced from~\eqref{eqdef:dE}) and thus there exists $L>0$ such that for all $N\geqslant 1$,
\begin{equation}
\label{eq:na2UNbounded}
\|\na^2 U_N\|_\infty \leqslant L
\end{equation}
with $\|\na^2 U_N\|_\infty $ the supremum of the operator norm of $\na^2 U(\bx)$ (associated to the Euclidean norm over $\R^{dN}$). In particular this ensures well-posedness of~\eqref{eq:Langevin} and~\eqref{eq:kinFP}.

\begin{lemma}\label{lem:UNconvex}
Under Assumptions~\ref{assum:Convexetal} and~\ref{assum:regularite_bis}, for any $N\geqslant 1$, $U_N$ is convex.
\end{lemma}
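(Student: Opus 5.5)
We show directly that $U_N$ is convex along straight lines in $\R^{dN}$, by realising each such line as a $\mathcal W_2$-geodesic of empirical measures and then using the displacement-convexity in Assumption~\ref{assum:Convexetal}.

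Fix $\bx,\by\in(\R^d)^N$ and set $\bx^s=(1-s)\bx+s\by$ for $s\in[0,1]$. Then $U_N(\bx^s)=N\mathcal E(\pi_{\bx^s})$, and the curve $s\mapsto \pi_{\bx^s}=\frac1N\sum_i\delta_{(1-s)x_i+sy_i}$ is the pushforward of $\pi$ under linear interpolation, where $\pi=\frac1N\sum_i\delta_{(x_i,y_i)}$ is a coupling of $\pi_{\bx}$ and $\pi_{\by}$.

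\emph{Easy case: $\pi$ optimal.} If $\pi$ is an optimal coupling for $\mathcal W_2(\pi_\bx,\pi_\by)$, the curve is a constant-speed $\mathcal W_2$-geodesic. Displacement-convexity then gives that $s\mapsto U_N(\bx^s)$ is convex.

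\emph{Main obstacle: $\pi$ not optimal.} In general the pairing $i\mapsto i$ is not optimal, so $s\mapsto\pi_{\bx^s}$ is not a geodesic. I would localize and use that convexity of a $\mathcal C^2$ function is a local property: it suffices to show $\na^2U_N(\bx)\geqslant0$ at every $\bx$.

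\emph{Points with distinct coordinates.} First take $\bx$ with pairwise distinct $x_i$. For a direction $\mathbf h$ and small $|s|$, the points $x_i+s h_i$ remain distinct and close to $x_i$.

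By cyclical monotonicity, the identity pairing between $\{x_i+sh_i\}$ and $\{x_i+th_i\}$ is optimal when $s,t$ are close enough. This holds because $\sum_i|x_i-x_{\sigma(i)}|^2$ is bounded below by $\min_{i\neq j}|x_i-x_j|^2>0$ for any permutation $\sigma\neq\mathrm{id}$. By contrast, the identity pairing costs $O(|s-t|^2)$, and the perturbation of the cross terms is $O(|s-t|\,|\mathbf h|)$.

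Hence on a small interval around $0$ the curve is a $\mathcal W_2$-geodesic. So $s\mapsto U_N(\bx+s\mathbf h)$ is locally convex, which gives $\mathbf h^T\na^2U_N(\bx)\mathbf h\geqslant0$.

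\emph{Remaining points.} Configurations with distinct coordinates are dense in $\R^{dN}$, and $\na^2U_N$ is continuous since $U_N\in\mathcal C^2$ under Assumption~\ref{assum:regularite_bis}. Therefore $\na^2U_N\geqslant0$ everywhere, and $U_N$ is convex.

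The part needing care is the local optimality of the identity coupling. It reduces to a finite comparison over the $N!$ permutations, which is straightforward but must be written cleanly.
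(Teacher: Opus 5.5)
Your proof is correct and follows essentially the same route as the paper. At a configuration with pairwise distinct coordinates, the segment $s\mapsto \pi_{\bx+s\mathbf h}$ for small $|s|$ is a $\mathcal W_2$-geodesic because the identity pairing is optimal; the paper takes $|s|\leqslant h$ with $h$ below a quarter of the minimal distance between the $x_i$'s, with directions normalized to $|h_i|\leqslant 1$. Displacement-convexity then gives $\mathbf h\cdot\na^2U_N(\bx)\mathbf h\geqslant 0$, and density plus continuity of $\na^2 U_N$ conclude. Your preliminary ``easy case'' is unnecessary, and your comparison over permutations just spells out the local optimality that the paper asserts directly.
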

\begin{proof}
Let $\bx,\mathbf{v}\in \R^{dN}$ be such that $x_i \neq x_j$  and $|v_i|\leqslant 1$ for all $i\neq j$. Then, for $h>0$ smaller than a quarter of  the smallest distance between the $x_i$'s, the $\mathcal W_2$-optimal transport between $\pi_{\bx- h\mathbf{v}}$ and $\pi_{\bx + h\mathbf{v}}$ is given by the map $T(x_i-hv_i) = x_i + h v_i$, and thus the curve $s \mapsto \pi_{\bx + s \mathbf{v}}$ for $s\in[-h,h]$ is a $\mathcal W_2$-geodesic. Using that $s \mapsto \mathcal E(\pi_{\bx + s \mathbf{v}})$ is convex shows that $ \mathbf{v}\cdot \na^2 U_N(\bx) \mathbf{v}\geqslant 0$. This shows that $\na^2 U_N(\bx) \geqslant 0$ for all $\bx$ with distinct coordinates, and thus for all $\bx \in \R^{dN}$ by density and continuity. 
\end{proof}

It will be convenient to prove the result, as a first step,  in the strongly convex case, adding the following condition:

\begin{assumption}
\label{assum:supplementaire}
The energy is of the form $\mathcal E (\rho) = \mathcal E_0(\rho) + r M_2(\rho)$ for some $r>0$ with an energy $\mathcal E_0$  satisfying  Assumptions~\ref{assum:Convexetal} and~\ref{assum:regularite_bis}. 
\end{assumption}

In the general case, we will add $rM_2(\rho)$ to $\mathcal E(\rho)$ and send $r$ to zero at the end.

Under Assumption~\ref{assum:supplementaire},
\begin{equation}\label{loc:UNquadra}
U_N(\bx) \geqslant r |\bx |^2 + N \inf \mathcal E_0\,.
\end{equation}
Together with the boundedness conditions on derivatives of order higher than $2$ in Assumption~\ref{assum:regularite_bis}, this shows that these two conditions together imply \cite[Assumption 2.2]{Lu} (with $n=1$). This condition is used in \cite{Lu} to ensure  well-posedness and justify computations, relying on hypoelliptic estimates from~\cite{herau2004isotropic}. Moreover,~\eqref{loc:UNquadra} implies that the Gibbs measure $\rho_*^N$ with probability density
\begin{equation}
\label{eq:GIbbs} 
\rho_*^N (\bx) \propto \exp \po - U_N\pf 
\end{equation}
is well-defined and in $\mathcal P_2(\R^{dN})$ for all $N\geqslant 1$. 
The invariant measure of~\eqref{eq:Langevin} is
\[\nu_*^N := \rho_*^N \otimes \kappa^{\otimes N}\,.\]
 Moreover, thanks to Lemma~\ref{lem:UNconvex},
\[\forall \bx,\bv\in\R^{dN},\qquad \bv\cdot \na^2 U_N(\bx)\bv \geqslant  2 r |\bv|^2\,.\]
As a consequence, by the Bakry-Emery criterion~\cite{BakryGentilLedoux}, $\rho_*^N$ (resp. $\nu_*^N$) satisfies a log-Sobolev inequality with constant $2r$ (resp. $\min(2r,1)$, by tensorization), uniformly over $N$. We recover the situation considered in~\cite{MONMARCHE20171721}. More generally, the fact that the log-Sobolev constant is uniform in $N$ has many nice consequences and have been extensively studied. Let us summarize the known results that will be useful for the rest of the study.

\begin{proposition}
\label{prop:old_results}
Under Assumption~\ref{assum:supplementaire}, there exists $C_*>0$, $a\in(0,1]$ such that:
\begin{enumerate}
\item Any solutions of~\eqref{eq:VFP} and~\eqref{eq:kinFP} with respective initial data $\nu_0 \in \mathcal P_2(\R^{2d})$ and $\nu_0^{N} \in \mathcal P_2(\R^{2dN})$ satisfy
\begin{equation}
\label{eq:PoCunif}
\forall t\geqslant 0,\qquad  \mathcal W_2^2(\nu_t^{\otimes N},\nu_t^N) \leqslant C_* e^{C_* t} \po 1+M_2(\nu_0)+\mathcal W_2^2(\nu_0^{\otimes N},\nu_0^N)\pf \,.
\end{equation}
\item For all $N\geqslant 1$, for the stationary solutions of \eqref{eq:VFP}  and~\eqref{eq:kinFP},
\begin{equation}
\label{eq:PoC*}
\mathcal W_2^2(\nu_*^{\otimes N},\nu_*^N) + \mathcal H(\nu_*^{\otimes N}|\nu_*^N) + \mathcal I(\nu_*^{\otimes N}|\nu_*^N) \leqslant C_* N^{1-a}  \,. 
\end{equation}
\item For any $\nu_0 \in \mathcal P_2(\R^{2d})$ with $\FF_k(\nu_0) < \infty$,
\begin{equation}
\label{eq:GrandesDev_nu0}
\frac1N \mathcal H(\nu_0^{\otimes N} | \nu_*^N) \underset{N\rightarrow \infty} \longrightarrow \FF_k(\nu_0)\,.
\end{equation}
\item Let  $\nu_0 \in \mathcal P_2(\R^{2d})$ with $\FF_k(\nu_0)<\infty$ and, for all $N\geqslant 1$, let $\nu_0^N \in \mathcal P(\R^{2dN})$ be an exchangeable probability measure. Set
\begin{equation}
\label{eq:eN}
e_N(t)= \po N \FF(\rho_t) -  \mathcal H(\rho_t^N|\rho_*^N)  \pf_+ \,,\qquad f_N(t)= \po N \FF_k(\nu_t) -  \mathcal H(\nu_t^N|\nu_*^N)  \pf_+
\end{equation}
with $\nu_t$ and $\nu_t^N$ the position marginals of the solutions of~\eqref{eq:VFP} and~\eqref{eq:kinFP} with respective initial data $\nu_0$ and $\nu_0^{ N}$, and $\rho_t$ and $\rho_t^N$ their position marginals. If $\sup_{N\geqslant 1}\mathcal W_2^2 (\nu_0^{\otimes N},\nu_0^N) <\infty$ then, for any $t>0$,
\begin{equation}
\label{eq:eN(t)}
\liminf_{N\rightarrow \infty} \frac1N  \co e_N(t) + f_N(t) +  \int_0^t \po e_N(s) + f_n(s)\pf \dd s\cf   = 0\,.
\end{equation}
\end{enumerate}
\end{proposition}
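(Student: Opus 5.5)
The four items are essentially collections of known facts for mean-field systems whose $N$-particle Gibbs measures satisfy a log-Sobolev inequality uniform in $N$. Under Assumption~\ref{assum:supplementaire} this uniform inequality holds with constant $\min(2r,1)$ by the Bakry--Emery criterion, as noted above. Together with the uniform Hessian bound~\eqref{eq:na2UNbounded}, this is the standing input. The plan is to derive each item from this input plus standard coupling and large-deviation arguments, citing \cite{MONMARCHE20171721,guillin2021uniform,GuillinWuZhang,Pavliotis} where possible.

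\textbf{Item 1.} I will use a synchronous coupling. Let $(\bX_t,\bV_t)$ solve~\eqref{eq:Langevin}, and let $(\bY_t,\mathbf W_t)$ be $N$ independent copies of the nonlinear process~\eqref{loc:XVMcK} driven by the same Brownian motions, with initial data optimally coupled. The difference of the drifts is $D\mathcal E(\pi_{\bX_t},X^i_t)-D\mathcal E(\rho_t,Y^i_t)$. I split it as
\[
\bigl[D\mathcal E(\pi_{\bX_t},X^i_t)-D\mathcal E(\pi_{\bY_t},Y^i_t)\bigr]+\bigl[D\mathcal E(\pi_{\bY_t},Y^i_t)-D\mathcal E(\rho_t,Y^i_t)\bigr].
\]
The first bracket is controlled by the Lipschitz bounds of Assumption~\ref{assum:regularite_bis}. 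The second bracket is the fluctuation of an empirical measure of i.i.d.\ variables. Because $D^2\mathcal E$ is bounded, a first-order expansion in the measure argument gives a mean-square bound of order $O(1)$ per particle. Summing over particles therefore gives an $O(1)$ total, not $O(N)$, and the same expansion produces the $1+M_2(\nu_0)$ prefactor through moment bounds on $\rho_t$. Gronwall's lemma then yields~\eqref{eq:PoCunif}.

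\textbf{Item 2.} This is the stationary propagation of chaos. I will use the uniform log-Sobolev inequality for $\nu_*^N$, and hence the uniform Talagrand inequality, to reduce the $\mathcal W_2$ and entropy terms to the Fisher information $\mathcal I(\nu_*^{\otimes N}|\nu_*^N)$. The fixed-point relation $\rho_*\propto e^{-\frac{\delta\mathcal E}{\delta m}(\rho_*,\cdot)}$ gives
\[
\mathcal I(\nu_*^{\otimes N}|\nu_*^N)=\sum_{i=1}^N\mathbb E\Bigl|D\mathcal E(\pi_{\mathbf Y},Y_i)-D\mathcal E(\rho_*,Y_i)\Bigr|^2,\qquad \mathbf Y\sim\rho_*^{\otimes N}.
\]
The same empirical-measure fluctuation bound as in Item 1 makes this $O(1)$, which is within the $N^{1-a}$ allowed, following~\cite{GuillinWuZhang,MonmarcheLSI}.

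\textbf{Item 3.} This is a Laplace-type (Sanov) computation. I write
\[
\frac1N\mathcal H(\nu_0^{\otimes N}|\nu_*^N)=\mathcal H(\nu_0)+\frac12 M_2(\nu_0)+\frac d2\ln(2\pi)+\mathbb E\bigl[\mathcal E(\pi_{\bX})\bigr]+\frac1N\ln Z_N,
\]
with $\bX\sim\rho_0^{\otimes N}$ and $Z_N$ the normalizing constant of $\rho_*^N$. The term $\mathbb E[\mathcal E(\pi_{\bX})]$ tends to $\mathcal E(\rho_0)$ by the law of large numbers together with the regularity of $\mathcal E$; condition~\eqref{eq:M2'} controls the diagonal self-interaction correction. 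For the constant, I need $\frac1N\ln Z_N\to-\inf_\rho(\mathcal E+\mathcal H)$. This follows from the Gibbs variational principle: testing with $\rho^{\otimes N}$ gives one bound, and the sharp upper bound follows from Laplace's principle (or from Item 2).

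\textbf{Item 4.} This is the main obstacle, since only a $\liminf$ is required. I will follow \cite{Pavliotis,GuillinWuZhang}. By convexity of $x\mapsto x\ln x$ and exchangeability, an entropy lower bound holds, with lower semicontinuity of the rescaled relative entropy along the chaotic sequence $\nu^N_t$, which converges to $\nu_t$ by Item 1. This gives
\[
\liminf_N \frac1N\mathcal H(\nu_t^N|\nu_*^N)\geqslant\FF_k(\nu_t),
\]
and the same bound for the position marginals relative to $\rho_*^N$ with $\FF(\rho_t)$. Hence the positive parts $e_N$ and $f_N$ satisfy $\frac1N e_N(t)\to0$ and $\frac1N f_N(t)\to0$ pointwise. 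To integrate over time, I would use dominated convergence. The required uniform bound $\frac1N f_N\leqslant \FF_k(\nu_s)+C$ follows from $\mathcal H\geqslant0$ and the decay of $\FF_k(\nu_s)$ along the flow.

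The delicate point is the time-$0$ input. Item 1 provides Wasserstein closeness, but for the entropy lower bound I need a version of lower semicontinuity for the Gibbs reference measure $\nu_*^N$. This requires expanding $\ln\nu_*^N=-U_N-\ln Z_N+\dots$ and using Item 3 for $Z_N$ together with continuity of $\mathcal E$ under $\mathcal W_2$. I would try this first via the $N$-particle version of the identity in Item 3, replacing $\rho_0^{\otimes N}$ by the marginal of $\nu_t^N$, and then pass to the limit.
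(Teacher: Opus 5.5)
Your plan follows the paper's proof item by item. Item~1 is a synchronous coupling. Item~2 reduces to $\mathcal I(\nu_*^{\otimes N}|\nu_*^N)$ via the uniform log-Sobolev and Talagrand inequalities and the fixed-point relation for $\rho_*$. Item~3 uses the entropy decomposition with Tse's bound \eqref{loc:Tse}, and gets $\frac1N\ln Z_N\to 0$ from the Gibbs variational principle and Item~2. Item~4 expands $\ln\nu_*^N$, uses superadditivity of entropy against a product reference measure and lower semicontinuity for the one-particle marginal, then dominated convergence with $f_N(s)/N\leqslant \FF_k(\nu_0)$.

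Two small corrections. In Item~3 the kinetic term is $\frac12\int|v|^2\nu_0$, not $\frac12 M_2(\nu_0)$. In Item~4, the convergence $\frac1N\int U_N\rho_t^N\to\mathcal E(\rho_t)$ is not just continuity of $\mathcal E$, since $\bX\sim\rho_t^N$ is not i.i.d. The paper couples $\bX$ optimally with $\bY\sim\rho_t^{\otimes N}$, uses \eqref{eq:na2UNbounded} and $\mathcal W_2^2(\rho_t^N,\rho_t^{\otimes N})=O(1)$ to get an $O(N^{-1/2})$ error, and then applies \eqref{loc:Tse}.

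The step that does not work as you justify it is the fluctuation estimate behind Items~1 and~2; also, ``$O(1)$ per particle'' should read $O(1/N)$. A first-order expansion in the measure variable evaluates $\frac{\delta D\mathcal E}{\delta m}$ at the random interpolation $(1-s)\rho+s\pi_{\bY}$. Bounded $D^2\mathcal E$ therefore only yields $|D\mathcal E(\pi_{\bY},y)-D\mathcal E(\rho,y)|\leqslant \|D^2\mathcal E\|_\infty\,\mathcal W_1(\pi_{\bY},\rho)$. Its mean square decays at a dimension-dependent Fournier--Guillin rate (like $N^{-2/d}$ for $d\geqslant 3$), not like $1/N$. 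Summed over the $N$ particles this gives $N^{1-a}$. That is exactly why the paper states Item~2 with $N^{1-a}$ and proves it with this Lipschitz bound and \cite[Theorem 1]{FournierGuillin}. Your claimed $O(1)$ in Item~2 is unsupported, but you can simply fall back on that route.

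In Item~1, by contrast, the right-hand side of \eqref{eq:PoCunif} is independent of $N$, so the $O(1/N)$ per-particle bound is genuinely needed. It does hold under Assumption~\ref{assum:regularite_bis}, but only by exploiting independence and one more derivative. One way is an Efron--Stein bound for the variance: resampling $Y_j$ moves $\pi_{\bY}$ by at most $|Y_j-Y_j'|/N$ in $\mathcal W_1$, which needs only $D^2\mathcal E$. This is combined with a one-particle decoupling argument for the bias, which uses boundedness of $D^3\mathcal E$. The paper delegates exactly this estimate to \cite[Proposition 5.2]{Songbo2}. In your write-up it must be either proved along these lines or cited, not attributed to a first-order expansion.
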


\begin{proof}
\emph{Item 1.} Exploiting~\eqref{eq:na2UNbounded} and Assumption~\ref{assum:regularite_bis}, the bound~\eqref{eq:PoCunif} follows from a classical synchronous coupling argument, see  \cite[Proposition 5.2]{Songbo2}.

\medskip

\emph{Item 2.} Under Assumption~\ref{assum:supplementaire}, $\nu_*^N$ satisfies log-Sobolev and Talagrand inequalities uniform in $N$, which means that we only have to bound $\mathcal I(\nu_*^{\otimes N}|\nu_*^N)$ to get~\eqref{eq:PoC*}. Since $\rho_*$ is a critical point of $\FF$, it satisfies $\na \ln \rho_* = - D\mathcal E(\rho_*,\cdot)$, hence
\begin{align*}
\mathcal I(\nu_*^{\otimes N}|\nu_*^{N}) &= \int_{\R^{dN}} \left|\na \ln \rho_*^{\otimes N} + \na U_N\right|^2 \rho^{\otimes N}\\
&=  \sum_{i=1}^N \int_{\R^{dN}}|D\mathcal E(\rho_*,x_i) - D\mathcal E(\pi_{\bx},x_i)|^2 \rho_*^{\otimes N}(\bx) \dd\bx  \\
&\leqslant N \|D^2 \mathcal E\|_\infty^2 \int_{\R^{dN}}\mathcal W_2^2( \rho_*,\pi_{\bx}) \rho_*^{\otimes N}(\bx) \dd\bx\,.
\end{align*}
As a conclusion,~\eqref{eq:PoC*} follows from \cite[Theorem 1]{FournierGuillin}.

\medskip

\emph{Item 3.} Thanks to~\eqref{eq:PoC*}, we already know the result for $\nu_0^{\otimes N} = \nu_*^{\otimes N}$,  since
\[0 \leqslant \frac1N \mathcal H(\nu_0^{\otimes N} |\nu_*^N) =  \int_{\R^{d}} \rho_* \ln \rho_* +   \int_{\R^{dN}} \mathcal E(\pi_{\bx}) \rho_*^{\otimes N}  + \frac1N \ln \int_{\R^{dN}} e^{-U_N} \leqslant C_* N^{-a}  \underset{N\rightarrow\infty}\longrightarrow  0\,. \]
Since $\FF(\rho_*)=0$, this amounts to 
\begin{equation}
\label{eqloc:0leq}
0 \leqslant   \int_{\R^{dN}}   \mathcal E(\pi_{\bx})  \rho_*^{\otimes N} - \mathcal E(\rho_*)  + \frac1N \ln \int_{\R^{dN}} e^{-U_N} \leqslant C_* N^{-a}  \underset{N\rightarrow\infty}\longrightarrow  0\,.
\end{equation}
From~\eqref{eq:M2'}, following \cite[Theorem 4.2.9 (i)]{tse2019quantitative}, for any $\rho \in \mathcal P_2(\R^{d})$,
\begin{equation}
\label{loc:Tse}
\left| \int_{\R^{dN}} \ \mathcal E(\pi_{\bx})    \rho^{\otimes N} - \mathcal E(\rho)\right| \leqslant \frac{K(M_2(\rho))}{2N}\,.
\end{equation}
Using this with $\rho = \rho_*$ and~\eqref{eqloc:0leq} this means that
\begin{equation}
\label{loc:ZN}
 \frac1N \ln \int_{\R^{dN}} e^{-U_N}  \underset{N\rightarrow\infty}\longrightarrow  0\,.
\end{equation}
 Now, for any $\nu_0 \in \mathcal P_2(\R^{2d})$,
 \begin{align*}
 \frac1N \mathcal H(\nu_0^{\otimes N} | \nu_*^N)  &=  \mathcal H(\nu_0) +   \int_{\R^{dN}} \mathcal E(\pi_{\bx}) \rho_0^{\otimes N} + \int_{\R^{2d}}\frac{|v|^2}{2}\nu_0   + \frac1N \ln \int_{\R^{dN}} e^{-U_N} + \frac{d}{2}\ln(2\pi) \\
 & \underset{N\rightarrow \infty}\longrightarrow \mathcal F_k(\nu_0)\,,
\end{align*}
 where we used~\eqref{loc:Tse} and~\eqref{loc:ZN}.

\medskip

\emph{Item 4.} Since $e_N(t)/N \leqslant \FF(\rho_t) \leqslant \FF_k(\nu_t) \leqslant \FF_k(\nu_0) $, by dominated convergence, it is sufficient to prove that $\liminf e_N(t)/N =0$ for any $t\geqslant 0$ (and similarly for $f_N$). Thanks to~\eqref{eq:PoCunif},
\begin{equation}
\label{loc:pocProof}
\mathcal W_2^2 (\rho_t^{\otimes N},\rho_t^N) \leqslant \mathcal W_2^2 (\nu_t^{\otimes N},\nu_t^N) = \underset{N\rightarrow \infty}O(1)\,.
\end{equation}
By interchangeability, this implies that the one-particle marginal $\rho_t^{N,1}$ of $\rho_t^N$ converges to $\rho_t$ in $\mathcal W_2$ as $N\rightarrow \infty$.

We decompose
\begin{align*}
 \mathcal H(\rho_t^N|\rho_*^N) &=  \mathcal H(\rho_t^N|\kappa^{\otimes N}) - \frac{Nd}{2}\ln(2\pi) - \frac12 M_2(\rho_t^N) + \int_{\R^{dN}} U_N \rho_t^N  + \ln\int_{\R^{dN}} e^{-U_n}\,.
\end{align*}
By the additivity property of the relative entropy with respect to a product probability measure (see \cite[Lemma 18]{GuillinWuZhang}), the interchangeability of $\rho_t^N$ and~\eqref{loc:ZN},
\begin{align*}
\frac1N  \mathcal H(\rho_t^N|\rho_*^N)  &\geqslant \mathcal H(\rho_t^{N,1}|\kappa) - \frac{d}{2}\ln(2\pi) - \frac12 M_2(\rho_t^{N,1}) + \int_{\R^{dN}} \mathcal E(\pi_{\bx}) \rho_t^N + \underset{N\rightarrow \infty}o(1)\\
&= \mathcal H(\rho_t^{N,1}) + \int_{\R^{dN}} \mathcal E(\pi_{\bx}) \rho_t^N + \underset{N\rightarrow \infty}o(1)\,.
\end{align*}
As $N \rightarrow \infty$, since $\rho_t^{N,1} \rightarrow \rho_t$ and the entropy is lower semi-continuous, $\liminf \mathcal H(\rho_t^{N,1}) \geqslant \mathcal H(\rho_t)$. To conclude, it only remains to show that
\begin{equation}
\label{loc:epixepiy}
\left|\int_{\R^{dN}} \mathcal E(\pi_{\bx}) \rho_t^N -  \mathcal E(\rho_t)\right| \underset{N\rightarrow \infty}\longrightarrow 0\,.
\end{equation}
Let $\bX=(X_1,\dots,X_N)$ and $\bY=(Y_1,\dots,Y_N)$ be an optimal coupling of $\rho_t^N$ and $\rho_t^{\otimes N}$, so that
\[\mathbb E\po |\bX - \bY|^2\pf = \mathcal W_2^2(\rho_t^N,\rho_t^{\otimes N})  = \underset{N\rightarrow \infty}O(1) \,.\]
Then, using~\eqref{loc:Tse} with $\rho=\rho_t$, thanks to~\eqref{eq:na2UNbounded},
\begin{align*}
\left|\int_{\R^{dN}} \mathcal E(\pi_{\bx}) \rho_t^N -  \mathcal E(\rho_t)\right| & \leqslant \frac1N\left|\mathbb E \po U_N(\bX) - U_N(\bY)\pf \right| + \underset{N\rightarrow \infty}o(1) \\
& \leqslant \frac1N \mathbb E \po |\na U_N(\bY)||\bX-\bY|\pf   + \underset{N\rightarrow \infty}o(1) \\
& \leqslant \frac1N \sqrt{ 2\mathbb E \po |\na U_N(0)|^2 + L |\bY|^2 \pf \mathbb E \po|\bX-\bY|^2\pf }   + \underset{N\rightarrow \infty}o(1) \\
& \leqslant \frac{\sqrt{2}}{\sqrt{N}} \po   |D\mathcal E(\delta_0,0)| + \sqrt{L M_2(\rho_t)} \pf \sqrt{\mathbb E \po|\bX-\bY|^2\pf }   + \underset{N\rightarrow \infty}o(1) \\
&= \underset{N\rightarrow \infty}o(1) \,.
\end{align*}
This concludes the proof for $e_N$. The proof for $f_N$ is the same mutatis mutandis (i.e. replacing $\mathcal E$ by $\nu \mapsto \mathcal E(\rho)+\frac12\int_{\R^{2d}} |v|^2 \nu(\dd x\dd v)$ and exploiting the propagation of chaos~\eqref{loc:pocProof} for $\nu_t^N$ instead of simply $\rho_t^N$).
\end{proof}

The key point in order to apply the approach of \cite{Lu} with the optimal PL constant $\lambda_*$ and without having to establish or assume a uniform-in-$N$ LSI with constant $\lambda_*$ is the following approximate Talagrand inequality for the Gibbs measure $\rho_*^N$.

\begin{proposition}
\label{prop:approxTalagrand}
Under Assumption~\ref{assum:supplementaire}, for any $\nu_0 \in \mathcal P_2(\R^{2d})$ with $\FF_k(\nu_0)<\infty$ and any $(\nu_0^N)_{N\geqslant 1}$ with $\nu_0^N \in \mathcal P_2(\R^{2dN})$ and such that $\sup_{N\geqslant 1}\mathcal W_2(\nu_0^{\otimes N},\nu_0^N) <\infty$, there exists $C_0>0$ such that for all $N\geqslant 1$ and all $t\geqslant 0$, using the notations of Proposition~\ref{prop:old_results} and recalling that $\rho_t,\rho_t^N$ are the position marginals of $\nu_t,\nu_t^N$,
\begin{equation}
\label{eq:approxTalagrand}
\mathcal W_2^2(\rho_t^N,\rho_*^N) \leqslant \frac2{\lambda_*} \mathcal H(\rho_t^{N}|\rho_*^N) + R_N(t),
\end{equation}
with $R_N(t) = C_0 e^{C_* t} N^{1-a/2} + 2e_N(t)/\lambda_*$.
\end{proposition}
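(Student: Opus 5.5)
Our strategy is to pass through the product measures and use the mean-field PL inequality~\eqref{eq:lambdaPL} at the level of one particle. By the triangle inequality,
\[
\mathcal W_2(\rho_t^N,\rho_*^N) \leqslant \mathcal W_2(\rho_t^N,\rho_t^{\otimes N}) + \mathcal W_2(\rho_t^{\otimes N},\rho_*^{\otimes N}) + \mathcal W_2(\rho_*^{\otimes N},\rho_*^N)\,.
\]
The first term is $O(e^{C_*t/2})$ by~\eqref{eq:PoCunif}, since $\mathcal W_2$ of position marginals is bounded by that of the full laws and $\sup_N\mathcal W_2(\nu_0^{\otimes N},\nu_0^N)<\infty$. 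The third term is $O(N^{(1-a)/2})$ by~\eqref{eq:PoC*}. For the middle term, tensorization of $\mathcal W_2$ gives $\mathcal W_2^2(\rho_t^{\otimes N},\rho_*^{\otimes N}) = N\mathcal W_2^2(\rho_t,\rho_*)$. Assumption~\ref{assum:Convexetal} then gives $N\mathcal W_2^2(\rho_t,\rho_*)\leqslant \frac{2}{\lambda_*}N\FF(\rho_t)$. Finally, by definition~\eqref{eq:eN} of $e_N$,
\[
N\FF(\rho_t)\leqslant \mathcal H(\rho_t^N|\rho_*^N)+e_N(t).
\]
So the middle term squared is at most $\frac2{\lambda_*}\mathcal H(\rho_t^N|\rho_*^N)+\frac2{\lambda_*}e_N(t)$. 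This is exactly the main part of~\eqref{eq:approxTalagrand}, together with the $2e_N/\lambda_*$ piece of $R_N$.

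Next we square the triangle inequality, writing $A=\mathcal W_2(\rho_t^{\otimes N},\rho_*^{\otimes N})$ and $B$ for the sum of the two error terms. We expand $(A+B)^2=A^2+2AB+B^2$. Here $B\leqslant C e^{C_*t/2}N^{(1-a)/2}$ for $N\geqslant1$ (absorbing the $O(e^{C_*t/2})$ term, since $N^{(1-a)/2}\geqslant 1$), so $B^2\leqslant C e^{C_*t}N^{1-a}\leqslant Ce^{C_*t}N^{1-a/2}$.

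The cross term $2AB$ is the main point to handle, because we must not lose a multiplicative factor on $A^2$. We bound $A$ crudely, not through the entropy. We have $A^2=N\mathcal W_2^2(\rho_t,\rho_*)\leqslant \frac{2N}{\lambda_*}\FF(\rho_t)$, and $\FF(\rho_t)\leqslant \FF_k(\nu_t)\leqslant\FF_k(\nu_0)<\infty$ since $\FF_k$ is non-increasing along~\eqref{eq:VFP} and dominates $\FF$ by subadditivity. Hence $A\leqslant C\sqrt N$. Then
\[
2AB\leqslant C\sqrt N\, e^{C_*t/2}N^{(1-a)/2}= C e^{C_*t/2}N^{1-a/2}\leqslant Ce^{C_*t}N^{1-a/2}.
\]
This explains the exponent $1-a/2$ in $R_N$. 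Collecting the terms, with $C_0$ depending on $\lambda_*$, $\FF_k(\nu_0)$, $M_2(\nu_0)$, $\sup_N\mathcal W_2(\nu_0^{\otimes N},\nu_0^N)$ and $C_*$, yields~\eqref{eq:approxTalagrand}.

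The only technical checks are the following. First, the constants in~\eqref{eq:PoCunif} are uniform in $N$. Second, the inequality $\mathcal W_2(\rho_*^{\otimes N},\rho_*^N)^2\leqslant C_*N^{1-a}$ is taken directly from~\eqref{eq:PoC*}. Third, the case $\rho_t=\rho_*$ is trivial in the PL step. I expect the cross-term bookkeeping above to be the only delicate point, and the crude bound $A\lesssim\sqrt N$ resolves it.
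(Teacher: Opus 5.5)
Your proposal is correct and follows essentially the same route as the paper. It uses the same three-term triangle inequality through $\rho_t^{\otimes N}$ and $\rho_*^{\otimes N}$, tensorization plus the definition of $\lambda_*$ for the middle term, and the crude bound $\FF(\rho_t)\leqslant \FF_k(\nu_0)$ on the cross term, which produces the $N^{1-a/2}$ exponent; the only cosmetic point is that \eqref{eq:PoC*} is stated for $\nu_*$, so you should pass to position marginals there as well, just as you did for the first term.
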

\begin{proof}
By triangular inequality, applying then~\eqref{eq:PoCunif}, \eqref{eq:PoC*}, the scaling property of $\mathcal W_2$ for tensor products  and then \eqref{eq:lambdaPL}, for some constant $C_0'>0$ independent from $N$,
\begin{align*}
\mathcal W_2(\rho_t^N,\rho_*^N)& \leqslant \mathcal W_2(\rho_t^N,\rho_t^{\otimes N}) + \mathcal W_2(\rho_t^{\otimes N},\rho_*^{\otimes N}) + \mathcal W_2(\rho_*^{\otimes N},\rho_*^N) \\
&\leqslant \sqrt{C_0' e^{C_* t} (1+N^{1-a})} + \sqrt{N} \mathcal W_2(\rho_t,\rho_*) \\
&\leqslant \sqrt{C_0' e^{C_* t}(1+N^{1-a})} + \sqrt{2 N \FF(\rho_t)/\lambda_*} \,.
\end{align*} 
Taking the square and using that $\FF(\rho_t) \leqslant \FF_k(\nu_t)\leqslant \FF_k(\nu_0) $  yields
\[
\mathcal W_2^2(\rho_t^N,\rho_*^N) ,\leqslant C_0' e^{C_* t}(1+N^{1-a})+ 2\sqrt{2C_0' e^{C_* t}(1+N^{1-a})   N \FF_k(\nu_0) /\lambda_*} + \frac{2}{\lambda_*} N \FF(\rho_t),\]
which gives~\eqref{eq:approxTalagrand} thanks to the definition~\eqref{eq:eN} of $e_N$.
\end{proof}

\begin{remark}\label{rem:lambda(t)}
It is clear from the proof that, for a given $t\geqslant0$, the inequality~\eqref{eq:approxTalagrand} still holds if we replace $\lambda_*$ by $\lambda(t):= 2\FF(\rho_t)/\mathcal W_2^2(\rho_t,\rho_*)$.
\end{remark}

\subsection{Proof of Theorem~\ref{thm:main}}
The argument is simply to follow the proof of \cite[Theorem 1]{Lu} and to replace the Talagrand inequality by~\eqref{eq:approxTalagrand} whenever necessary (i.e. in \cite[Lemma 3.2]{Lu}). We will not repeat line by line the whole argument but simply highlight the change, and refer to \cite{Lu} for details (such as  the well-posedness of the quantities involved; in particular, we use the notion of regular solutions of~\eqref{eq:kinFP} from  \cite[Definition 2.5]{Lu}, which is used to justify  time derivations).

We decompose the proof in 6 steps. From steps 1 to 4, the strong Assumption~\ref{assum:supplementaire} is enforced. In Step 5, the strong convexity condition is removed by letting $r\rightarrow 0$. In Step 6, the regularity Assumption~\ref{assum:regularite_bis} is removed by letting $n\rightarrow \infty$ in Assumption~\ref{assum:regularite}.

\paragraph{Step 1: results from~\cite{Lu}.} Let us first introduce some key notions from this work. For a density $\nu \in \mathcal P_2(\R^{2dN})$ with finite entropy, denote by $g$ its density with respect to $\nu_*^N$, by $q(\bx)=\int_{\R^{dN}} g(\bx,\bv)\kappa^{\otimes N}(\dd \bv)$ the density of its position marginal $\rho$ with respect to $\rho_*^N$, and introduce the Wasserstein current corrector
\[\COT(\nu) = \int_{\R^{2dN}} \bv \cdot \po \bx - T_{q}(\bx)\pf  \nu(\dd \bx,\dd\bv)\dd \bx\dd \bv \,, \]
where $T_q$ is the optimal transport map from $\rho = q \rho_*^N$ to $\rho_*^N$. Define further  the average current energy
\[J(\nu)  = \int_{\R^d} \frac{1}{q(\bx)} \left|\int_{\R^d} \bv g(\bx,\bv)\kappa(\bv) \right|^2 \rho_* ^N(\bx)\dd x, \]
and the fiber entropy and velocity Fisher information
\[\mathcal H_v(\nu) = \int_{\R^{dN}} \mathcal H\po \left. \frac{\nu(\bx,\cdot)}{\rho(\bx)}\right |\kappa^{\otimes N} \pf \rho(\bx)\dd \bx  \,,\qquad  \mathcal I_v(\nu) = \int_{\R^{2dN}} \left|\na_{\bv} \ln \nu(\bx,\bv) + \bv\right|^2 \nu(\bx,\bv)\dd \bx\dd \bv\,.\]
 Finally, for $\varepsilon>0$, define the modified entropy as
\[\Heps(\nu) = \mathcal H(\nu|\nu_*^N) + \varepsilon \COT(\nu)\,.\]
Then, it is established in~\cite[Lemmas 3.1 and 3.2]{Lu} that
\begin{equation}
\label{eq:Lu1}
J(\nu) \leqslant \min \po 2\mathcal H_v(\nu),\mathcal I_v(\nu)\pf \qquad  |\COT(\nu)| \leqslant \sqrt{J(\nu)} \mathcal W_2(\rho,\rho_*^N)  
\end{equation}
and in \cite[Equation (6.3)]{Lu} that, along regular solutions of~\eqref{eq:kinFP},
\begin{equation}
\label{loc:Lu2}
\partial_t \Heps (\nu_t^N) \leqslant - \co (\gamma - 3 \varepsilon) \mathcal I_v(\nu_t^N) + \varepsilon \mathcal H(\rho_t^N|\rho_*^N) + \varepsilon \gamma \COT(\nu_t^N)\cf \,.
\end{equation}

\paragraph{Step 2: using the approximate Talagrand inequality.} For now, consider initial conditions as follows: for $\nu_0\in \mathcal P_2(\R^{2d})$ satisfying $\mathcal I_k(\nu_0) < \infty $, we let $\nu_0^N = (1-q_N) \nu_0^{\otimes N} + q_N \nu_*^N$ with $q_N = N^{-2} $. This choice ensures that
\[  \mathcal W_2^2 (\nu_0^N,\nu_0^{\otimes N}) \leqslant q_N\mathcal W_2^2 (\nu_*^{N},\nu_0^{\otimes N}) \leqslant 2 q_N  \po \mathcal W_2^2 (\nu_*^{N},\nu_*^{\otimes N}) + N \mathcal W_2^2 (\nu_*,\nu_0)\pf \underset{N\rightarrow \infty}\longrightarrow 0, \]
thanks to~\eqref{eq:PoC*}.  Since $\nu_*^N$ is invariant for the Markov process~\eqref{eq:Langevin}, $\nu_t^N \geqslant q_N \nu_*^N$ for all $N\in \N$ which, together with the hypoellipticity estimates from \cite{herau2004isotropic}, implies that $(\nu_t^N)_{t\geqslant t_0}$ is a regular solution of~\eqref{eq:kinFP} for any $t_0>0$, as shown in \cite[Lemmas 7.1 and 7.2]{Lu}.

Exploiting~\eqref{eq:Lu1} and Young's inequality yields
\[| \Heps(\nu_t^N) - \mathcal H(\nu_t^N |\nu_*^N) | \leqslant \varepsilon a \mathcal H_v(\nu_t^N) + \frac{\varepsilon}{2a} \mathcal W_2^2(\rho_t^N,\rho_*^N) \]
for any $a>0$.  This is the first  part where the Talagrand inequality for $\rho_*^N$ is used in \cite{Lu}.  Instead, we use its approximation~\eqref{eq:approxTalagrand}, obtaining
\begin{align*}
| \Heps(\nu_t^N) - \mathcal H(\nu_t^N|\nu_*^N) | & \leqslant \varepsilon a \mathcal H_v(\nu_t^N) + \frac{\varepsilon }{a \lambda_*} \mathcal H(\rho_t^N|\rho_*^N) +  \frac{\varepsilon}{2a} R_N(t)\,. 
\end{align*}
Using that $\mathcal H_v(\nu_t^N)  + \mathcal H(\rho_t^N|\rho_*^N) = \mathcal H(\nu_t^N|\nu_*^N) $ by classical decomposition of the relative entropy, we choose $a=\lambda_*^{-1/2}$ to get 
\begin{equation}
\label{loc:equivalenceH}
| \Heps(\nu_t^N) - \mathcal H(\nu_t^N|\nu_*^N) |  \leqslant \frac{\varepsilon}{\sqrt{\lambda_*}} \mathcal H(\nu_t^N|\nu_*^N) +  \frac{\varepsilon}{2a} R_N(t)\,.
\end{equation}

We now turn to~\eqref{loc:Lu2}. Using~\eqref{eq:Lu1} to bound the last term gives
\begin{align*}
\partial_t \Heps (\nu_t^N) &\leqslant - \co (\gamma - 3 \varepsilon) \mathcal I_v(\nu_t^N) + \varepsilon \mathcal H(\rho_t^N|\rho_*^N) - \varepsilon \gamma \sqrt{\mathcal I_v(\nu_t^N)} \mathcal W_2(\rho_t^N,\rho_*^N)\cf\\
& \leqslant - \co (\gamma - 3 \varepsilon - \varepsilon \gamma a) \mathcal I_v(\nu_t^N) + \varepsilon \mathcal H(\rho_t^N|\rho_*^N) - \frac{\varepsilon \gamma}{4a}  \mathcal W_2^2(\rho_t^N,\rho_*^N)\cf 
\end{align*}
for any $a>0$ by Young's inequality. This is the second part where the Talagrand inequality for $\rho_*^N$ is used in \cite{Lu}, and again we use~\eqref{eq:approxTalagrand} instead to obtain, for all $t>0$,
\begin{equation}
\label{loc:dfgdfg}
\partial_t \Heps (\nu_t^N) \leqslant - \co (\gamma - 3 \varepsilon - \varepsilon \gamma a) \mathcal I_v(\nu_t^N) + \varepsilon \po 1 - \frac{\gamma}{2a \lambda_*}\pf \mathcal H(\rho_t^N|\rho_*^N) - \frac{\varepsilon \gamma}{4a}  R_N(t)\cf \,.
\end{equation}

\paragraph{Step 3: closing the modified entropy dissipation inequality.} From now on we can simply follow \cite{Lu}, without worrying about the additional error terms which will vanish at the end. Focusing on the two first terms of~\eqref{loc:dfgdfg} and treating them as in \cite{Lu}, we are led to define $\Gamma = \gamma/\sqrt{\lambda_*} $ and $\theta = \varepsilon/\sqrt{\lambda_*} $ and chose $a=1/(4\varepsilon)$  to get
\[\partial_t \Heps (\nu_t^N) \leqslant - \sqrt{\lambda_*} \co \po \frac34 \Gamma -  3\theta \pf  \mathcal I_v(\nu_t^N) + \theta \po 1 - 2\theta \Gamma\pf \mathcal H(\rho_t^N|\rho_*^N) - \frac{\varepsilon \gamma}{4a}  R_N(t)\cf \,.\]
Taking $\theta$ as  in the statement of Theorem~\ref{thm:main}, i.e.
\begin{equation}
\label{eq:interv_theta}
0 < \theta \leqslant \min \po \frac{\Gamma}{12}, \frac{1}{4\Gamma} \pf ,
\end{equation}
and using that $\mathcal I(\nu_t^N) \geqslant 2 \mathcal H_v(\nu_t^N)$ due to the log-Sobolev inequality for $\kappa^{\otimes N}$, we end up with 
\begin{align*}
\partial_t \Heps (\nu_t^N) & \leqslant - \sqrt{\lambda_*} \co    \Gamma  \mathcal H_v(\nu_t^N) + \frac{\theta}{2} \mathcal H(\rho_t^N|\rho_*^N) - \frac{\varepsilon \gamma}{4a}  R_N(t)\cf \\
& \leqslant - \sqrt{\lambda_*} \frac{\theta}{2}  \mathcal H(\nu_t^N|\nu_*^N) + \tilde C_1  R_N(t)
\end{align*}
where we used again the decomposition of the entropy and set $\tilde C_1 = \frac{\sqrt{\lambda_*} \varepsilon \gamma}{4a} $. Thanks to~\eqref{loc:equivalenceH}, we end up with
\begin{equation}\label{eqloc:dtH}
\partial_t \Heps (\nu_t^N) \leqslant - \sqrt{\lambda_*}\eta   \Heps (\nu_t^N) + \tilde C_2  R_N(t)\,,\quad \eta = \frac{\theta}{2(1+\theta)}\,,\quad \tilde C_2 = \tilde C_1 + \eta \sqrt{\lambda_*} \frac{\varepsilon}{2a}  \,.
\end{equation}

\paragraph{Step 4: conclusion (with the additional condition).}
Integrating~\eqref{eqloc:dtH} over times $s\in[t_0,t]$ for some $t_0>0$  and using twice the (approximate) equivalence between $\Heps (\nu_t^N)$ and $\mathcal H(\nu_t^N|\nu_*^N)$ provided by \eqref{loc:equivalenceH} (since $\theta<1$),
\begin{align*}
(1-\theta) \mathcal H(\nu_t^N|\nu_*^N) & \leqslant \Heps(\nu_t^N) + \frac{\varepsilon}{2a} R_N(t) \\
& \leqslant e^{-\sqrt{\lambda_*}\eta (t-t_0)} \Heps(\nu_{t_0}^{N}) + \tilde C_2 \int_0^t R_N(s)\dd s  + \frac{\varepsilon}{2a}  R_N(t) \\  
& \leqslant e^{-\sqrt{\lambda_*}\eta (t-t_0)} (1+\theta) \mathcal H(\nu_{t_0}^{N}|\nu_*^N)+(1-\theta) \tilde  R_N(t)\,.
\end{align*}
with $\tilde R_N(t) =  (\tilde C_2 \int_0^t R_N(s)\dd s  + \frac{\varepsilon}{2a}  R_N(t))/(1-\theta)$. Using that $t_0>0$ is arbitrary, that $t\mapsto \mathcal H(\nu_t^N|\nu_*^N)$ is decreasing and that $\mathcal H(\nu_{0}^{N}|\nu_*^N) \leqslant (1-q_N) \mathcal H(\nu_{0}^{\otimes N}|\nu_*^N)$ by Jensen inequality and convexity of $h\mapsto h\ln h$ over $\R_+$, we end up with
\begin{equation}
\label{eqloc:1-thetaH}
 \mathcal H(\nu_t^N|\nu_*^N) \leqslant  e^{-\sqrt{\lambda_*}\eta t} \frac{1+\theta}{1-\theta} \mathcal H(\nu_{0}^{\otimes N}|\nu_*^N)+ \tilde   R_N(t)\,.
\end{equation}
%
Thanks to~\eqref{eq:GrandesDev_nu0} and \eqref{eq:eN(t)}, dividing by $N$ and taking the $\liminf$ gives
\begin{equation}
\label{eqloc:Fkconclusion}
 \FF_k(\nu_t) \leqslant \liminf_{N\rightarrow \infty} \frac{\mathcal H(\nu_t^N|\nu_*^N) + f_N(t)}N   \leqslant e^{-\sqrt{\lambda_*}\eta t} \frac{1+\theta}{1-\theta} \FF_k(\nu_0)\,.
\end{equation}
For now we have established this for $\nu_0\in\mathcal P_2(\R^{2d})$ with $\mathcal I_k(\nu_0) < \infty$, however, thanks to \cite[Propositions 5.4 and 5.5]{Songbo2} and \cite[Equation (79)]{monmarche2025local}, we have that $\mathcal F_k(\nu_t)$ and $\mathcal I_k(\nu_t)$ are finite for all $t>0$ for any $\nu_0\in\mathcal P_2(\R^{2d})$. Applying the previous inequality with an arbitrary initial time $t_0>0$ and using that $t\mapsto \mathcal F_k(\nu_t)$ is non-increasing shows that~\eqref{eqloc:Fkconclusion} holds for any $\nu_0 \in \mathcal P_2(\R^{2d})$.

\paragraph{Step 5: removing the strong convexity condition.} Next, we have to get rid of the additional Assumption~\ref{assum:supplementaire} (with Assumption~\ref{assum:regularite_bis} still enforced). This is done by considering the energy $\mathcal E^r(\rho) = \mathcal E(\rho) +r M_2( \rho)$ for an arbitrary $r>0$. By convexity, the corresponding free energy $\FF^r$ admits a unique global minimizer $\rho_{*}^r$, and then the re-centered free energy $\rho \mapsto \FF^r(\rho) - \FF^r(\rho_{*}^r)$  satisfies Assumption~\ref{assum:supplementaire}. We deduce from the beginning of the proof that 
\begin{equation}\label{loc:Fkr}
 \FF_k^{r}(\nu_t^{r}) - \FF_k^{r}(\nu_*^r) \leqslant e^{-\sqrt{\lambda_*^{r}}\eta t} \frac{1+\theta}{1-\theta} \po \FF_k^{r}(\nu_0) - \FF_k^{r}(\nu_*^r) \pf
\end{equation}
for all $t\geqslant 0$, where the superscripts $r$ mean that these correspond to $\mathcal E^r$. For any $\rho \in \mathcal P_2(\R^d)$, as $r\rightarrow 0$, $\mathcal F^{r}(\rho) \rightarrow \FF(\rho)$, and moreover if $\rho^r \rightarrow \rho$ in $\mathcal P_2(\R^d)$ then $M_2(\rho^r)\rightarrow M_2( \rho)$ and thus, by the lower semi-continuity of $\mathcal F$, $\liminf_{r\rightarrow 0} \mathcal F^r(\rho^r) \geqslant \mathcal F(\rho)$. These two properties mean by definition that $\FF^r $ $\Gamma$-converges to $\FF$ (and thus similarly for $\FF_k^r$ to $\FF_k$). Moreover, using that $\rho_*^r$ and $\rho_*$ are the global minimizers respectively of $\FF^r$ and $\FF$,
\[r m_2(\rho_*^r) = \FF(\rho_*)+r m_2(\rho_*^r)   \leqslant \FF(\rho_*^r) + rm_2(\rho_*^r) = \FF^r(\rho_*^r) \leqslant \FF^r(\rho_*) =  rm_2(\rho_*)\,. \]
As a consequence, $(\rho_*^r)_{r\in(0,1]}$ is tight in $\mathcal P_2(\R^d)$. By $\Gamma$-convergence and uniqueness of the minimizer of $\FF$, we get $\rho_*^r \rightarrow \rho_*$ in $\mathcal P_2(\R^d)$ as $r\rightarrow 0$. Moreover, the previous bounds
\[r m_2(\rho_*^r) \leqslant  \FF^r(\rho_*^r)  \leqslant r m_2(\rho_*) \]
show that
\begin{equation}
\label{loc:FFr*->}
\FF^r(\rho_*^r)   \underset{r\rightarrow 0}\longrightarrow 0.
\end{equation}
As a consequence,
\begin{equation}
\label{loc:FFkr}
  \FF_k^{r}(\nu_0) - \FF_k^{r}(\nu_*^r)  \underset{r\rightarrow 0}{\longrightarrow} \FF_k(\nu_0)\,. 
\end{equation}
Next, by considering for $r\geqslant 0$ the kinetic McKean-Vlasov processes
\begin{equation}
\label{eq:LangevinNLr}
\left\{
\begin{array}{rcl}
\dd  X_t^r &=  &  V_t^r \dd t \\
\dd  V_t^r & =& - 2 r X_t^r - D\mathcal E(\rho_t^r ,  X_t^r) \dd t - \gamma  V_t^r\dd t  + \sqrt{2\gamma} \dd B_t
\end{array}
\right.
\end{equation} 
(with the same Brownian motion $B$ for all $r$) with initial condition $(X_0^r,Y_0^r) = (X_0,Y_0) \sim \nu_0$, we have that $(X_t^r,V_t^r)\sim \nu_t^r$ and $(X_t^0,V_t^0)\sim \nu_t$ for all $t\geqslant 0$, so that
\[\mathcal W_2^2(\nu_t^r,\nu_t) \leqslant \mathbb E \po |X_t^r - X_t|^2 + |V_t^r - V_t|^2\pf\,. \]
By the Grönwall lemma, uniform bounds on $M_2(\rho_s)$ over $s\in[0,t]$ and the regularity bounds from Assumption~\ref{assum:regularite_bis} it is straightforward to get that
\begin{equation}
\label{loc:unifWr}
\sup_{s\in[0,t]} \mathcal W_2(\nu_s^r,\nu_s) \underset{r\rightarrow 0}\longrightarrow 0
\end{equation}
for all $t\geqslant 0$. In particular, by $\Gamma$-convergence and~\eqref{loc:FFr*->},
\begin{equation}
\label{loc:FFkr2}
 \liminf_{r\rightarrow 0} \FF_k^{r}(\nu_t^{r}) - \FF_k^{r}(\nu_*^r) \geqslant \FF_k(\nu_t)\,.
\end{equation}
Finally, thanks to Remark~\ref{rem:lambda(t)}, we see that~\eqref{loc:Fkr} still holds if we replace $e^{-\sqrt{\lambda_*^{r}}\eta t}$ by
\[\exp \po - \eta \int_0^t \sqrt{\lambda^r(s)}\dd s \pf,\qquad \lambda^r(s) = \frac{2(\FF^r(\rho_s^r)- \FF^r(\rho_{*}^r))}{\mathcal W_2^2(\rho_s^r,\rho_*^r)}\,.\]
For any $\delta>0$, there exist $r_0>0$ such that for all $s\in [0,t]$ and $r\in(0,r_0]$,
\[\lambda^r(s) \geqslant \frac{2\FF^r(\rho_s)}{\delta+\mathcal W_2^2(\rho_s,\rho_*)} - \delta =: \lambda_\delta^0(s)\,.\]
Indeed, if it were not the case, we could find for some $\delta>0$ a sequence  $(s_k,r_k)_{k\in\N}$ on $[0,t]\times (0,r_0]$ with $r_k \rightarrow 0$ and (up to extracting a subsequence) $s_k \rightarrow s $ for some $s\in[0,t]$ such that $\lambda^{r_k}(s_k) < \lambda^0_\delta(s_k)  $ for all $k$. Using the uniform convergence~\eqref{loc:unifWr}, that $\lambda_\delta^0$ is continuous and the $\Gamma$-convergence of $\FF^r$, we would obtain a contradiction with $\lambda_\delta^0(s)  \geqslant \liminf_{r\rightarrow 0} \lambda^{r_k}(s_k) \geqslant \lambda_0^0(s)$. Since $\delta>0$ is arbitrary, we get that 
\[\limsup_{r\rightarrow 0}  \exp \po - \eta \int_0^t \sqrt{\lambda^r(s)}\dd s\pf \leqslant  \exp \po - \eta \int_0^t \sqrt{\lambda^0_0(s)}\dd s\pf \,. \]
Combining this with~\eqref{loc:FFkr2} and~\eqref{loc:FFkr}, we get 
\begin{align*}
\FF_k(\nu_t) & \leqslant  \liminf_{r\rightarrow 0} \FF_k^{r}(\nu_t^{r}) - \FF_k^{r}(\nu_*^r) \\
& \leqslant  \liminf_{r\rightarrow 0}  \exp \po - \eta \int_0^t \sqrt{\lambda^r(s)}\dd s\pf  \frac{1+\theta}{1-\theta} \po \FF_k^{r}(\nu_0) - \FF_k^{r}(\nu_*^r) \pf \\
&\leqslant \exp \po - \eta \int_0^t \sqrt{\lambda^0_0(s)}\dd s\pf  \frac{1+\theta}{1-\theta}  \FF_k(\nu_0)\,.
\end{align*}
This concludes the proof of Theorem~\ref{thm:main} (since $\lambda^0_0(s) \geqslant \lambda_*$ for all $s\geqslant 0$).

 \paragraph{Step 6: last approximation.} This is similar to the previous step: thanks to the previous proof, the result holds for $\mathcal E_n$ for all $n\in \N$ and all the conditions in Assumption~\ref{assum:regularite} have been set to get the desired result as  $n\rightarrow \infty$ by $\Gamma$-convergence.

\subsection*{Acknowledgments}

The research of PM is supported by the project CONVIVIALITY (ANR-23-CE40-0003) of the
French National Research Agency. We would like to thank Max Fathi and Viktor Stein  for pointing us out, respectively, the HWI inequality and references~\cite{wang2022accelerated,JMLR:v26:23-1288}. The total amount of generative artificial
intelligence tools involved in this work is exactly zero.

\bibliographystyle{plain}
\bibliography{biblio} 

\begin{thebibliography}{10}

\bibitem{albritton2024variational}
Dallas Albritton, Scott Armstrong, Jean-Christophe Mourrat, and Matthew Novack.
\newblock Variational methods for the kinetic fokker--planck equation.
\newblock {\em Analysis \& PDE}, 17(6):1953--2010, 2024.

\bibitem{altschuler2025shifted}
Jason~M Altschuler, Sinho Chewi, and Matthew~S Zhang.
\newblock Shifted composition iv: toward ballistic acceleration for log-concave
  sampling.
\newblock {\em arXiv preprint arXiv:2506.23062}, 2025.

\bibitem{ambrosio2005gradient}
Luigi Ambrosio, Nicola Gigli, and Giuseppe Savar{\'e}.
\newblock {\em Gradient flows: in metric spaces and in the space of probability
  measures}.
\newblock Springer Science \& Business Media, 2005.

\bibitem{andrieu2021hypocoercivity}
Christophe Andrieu, Alain Durmus, Nikolas N{\"u}sken, and Julien Roussel.
\newblock Hypocoercivity of piecewise deterministic markov process-monte carlo.
\newblock {\em The Annals of Applied Probability}, 31(5):2478--2517, 2021.

\bibitem{arbel2019maximum}
Michael Arbel, Anna Korba, Adil Salim, and Arthur Gretton.
\newblock Maximum mean discrepancy gradient flow.
\newblock {\em Advances in neural information processing systems}, 32, 2019.

\bibitem{BakryGentilLedoux}
Dominique Bakry, Ivan Gentil, and Michel Ledoux.
\newblock {\em Analysis and geometry of {M}arkov diffusion operators}, volume
  348 of {\em Grundlehren der Mathematischen Wissenschaften [Fundamental
  Principles of Mathematical Sciences]}.
\newblock Springer, Cham, 2014.

\bibitem{bashiri2020gradient}
Kaveh Bashiri and Anton Bovier.
\newblock Gradient flow approach to local mean-field spin systems.
\newblock {\em Stochastic Processes and their Applications}, 130(3):1461--1514,
  2020.

\bibitem{baudoin2017bakry}
Fabrice Baudoin.
\newblock Bakry--{\'e}mery meet villani.
\newblock {\em Journal of functional analysis}, 273(7):2275--2291, 2017.

\bibitem{Dagallier}
Roland {Bauerschmidt}, Thierry {Bodineau}, and Beno{\^i}t {Dagallier}.
\newblock {A criterion on the free energy for log-Sobolev inequalities in
  mean-field particle systems}.
\newblock {\em arXiv e-prints}, page arXiv:2503.24372, March 2025.

\bibitem{BierkensRoberts}
Joris Bierkens and Gareth Roberts.
\newblock A piecewise deterministic scaling limit of lifted
  {M}etropolis-{H}astings in the {C}urie-{W}eiss model.
\newblock {\em Ann. Appl. Probab.}, 27(2):846--882, 2017.

\bibitem{bouin2026quantitative}
{\'E}meric Bouin and Amic Frouvelle.
\newblock Quantitative stability of constant equilibria in a non-linear
  alignment model of self-propelled particles.
\newblock {\em arXiv preprint arXiv:2604.05927}, 2026.

\bibitem{Brigati}
Giovanni Brigati, Francis Lörler, and Lihan Wang.
\newblock Hypocoercivity meets lifts.
\newblock {\em Kinetic and Related Models}, 20(0):34--55, 2026.

\bibitem{bunne2022proximal}
Charlotte Bunne, Laetitia Papaxanthos, Andreas Krause, and Marco Cuturi.
\newblock Proximal optimal transport modeling of population dynamics.
\newblock In {\em International Conference on Artificial Intelligence and
  Statistics}, pages 6511--6528. PMLR, 2022.

\bibitem{cao2023explicit}
Yu~Cao, Jianfeng Lu, and Lihan Wang.
\newblock On explicit l 2-convergence rate estimate for underdamped langevin
  dynamics.
\newblock {\em Archive for Rational Mechanics and Analysis}, 247(5):90, 2023.

\bibitem{carmona2018probabilistic}
Ren{\'e} Carmona, Fran{\c{c}}ois Delarue, et~al.
\newblock {\em Probabilistic theory of mean field games with applications
  I-II}, volume~3.
\newblock Springer, 2018.

\bibitem{carrillo2019convergence}
Jos{\'e}~A Carrillo, Young-Pil Choi, and Oliver Tse.
\newblock Convergence to equilibrium in wasserstein distance for damped euler
  equations with interaction forces.
\newblock {\em Communications in Mathematical Physics}, 365(1):329--361, 2019.

\bibitem{CMCV}
Jos{\'e}~A. Carrillo, Robert~J. McCann, and C{\'e}dric Villani.
\newblock {Kinetic equilibration rates for granular media and related
  equations: entropy dissipation and mass transportation estimates}.
\newblock {\em Revista Matemática Iberoamericana}, 19(3):971 -- 1018, 2003.

\bibitem{cattiaux2019entropic}
Patrick Cattiaux, Arnaud Guillin, Pierre Monmarch{\'e}, and Chaoen Zhang.
\newblock Entropic multipliers method for langevin diffusion and weighted log
  sobolev inequalities.
\newblock {\em Journal of Functional Analysis}, 277(11):108288, 2019.

\bibitem{Songbo2}
Fan Chen, Yiqing Lin, Zhenjie Ren, and Songbo Wang.
\newblock {Uniform-in-time propagation of chaos for kinetic mean field Langevin
  dynamics}.
\newblock {\em Electronic Journal of Probability}, 29(none):1 -- 43, 2024.

\bibitem{Songbo}
Fan Chen, Zhenjie Ren, and Songbo Wang.
\newblock Uniform-in-time propagation of chaos for mean field langevin
  dynamics.
\newblock {\em arXiv preprint arXiv:2212.03050}, 2022.

\bibitem{JMLR:v26:23-1288}
Shi Chen, Qin Li, Oliver Tse, and Stephen~J. Wright.
\newblock Accelerating optimization over the space of probability measures.
\newblock {\em Journal of Machine Learning Research}, 26(31):1--40, 2025.

\bibitem{chizat}
L{\'e}na{\"i}c Chizat.
\newblock Mean-field langevin dynamics : Exponential convergence and annealing.
\newblock {\em Transactions on Machine Learning Research}, 2022.

\bibitem{10.3150/19-BEJ1178}
Arnak~S. Dalalyan and Lionel Riou-Durand.
\newblock {On sampling from a log-concave density using kinetic Langevin
  diffusions}.
\newblock {\em Bernoulli}, 26(3):1956 -- 1988, 2020.

\bibitem{Pavliotis}
Mat{\'\i}as~G Delgadino, Rishabh~S Gvalani, Grigorios~A Pavliotis, and Scott~A
  Smith.
\newblock Phase transitions, logarithmic sobolev inequalities, and
  uniform-in-time propagation of chaos for weakly interacting diffusions.
\newblock {\em Communications in Mathematical Physics}, pages 1--49, 2023.

\bibitem{10.1214/20-AAP1659}
George Deligiannidis, Daniel Paulin, Alexandre Bouchard-C{\^o}t{\'e}, and
  Arnaud Doucet.
\newblock {Randomized Hamiltonian Monte Carlo as scaling limit of the bouncy
  particle sampler and dimension-free convergence rates}.
\newblock {\em The Annals of Applied Probability}, 31(6):2612 -- 2662, 2021.

\bibitem{diaconis2000analysis}
Persi Diaconis, Susan Holmes, and Radford~M Neal.
\newblock Analysis of a nonreversible markov chain sampler.
\newblock {\em Annals of Applied Probability}, pages 726--752, 2000.

\bibitem{dolbeault2015hypocoercivity}
Jean Dolbeault, Cl{\'e}ment Mouhot, and Christian Schmeiser.
\newblock Hypocoercivity for linear kinetic equations conserving mass.
\newblock {\em Transactions of the American Mathematical Society},
  367(6):3807--3828, 2015.

\bibitem{durmus2024asymptotic}
Alain Durmus and Andreas Eberle.
\newblock Asymptotic bias of inexact markov chain monte carlo methods in high
  dimension.
\newblock {\em The Annals of Applied Probability}, 34(4):3435--3468, 2024.

\bibitem{EberleLift}
Lörler~Francis Eberle, Andreas.
\newblock {Non-reversible lifts of reversible diffusion processes and
  relaxation times}.

\bibitem{even2021continuized}
Mathieu Even, Rapha{\"e}l Berthier, Francis Bach, Nicolas Flammarion, Hadrien
  Hendrikx, Pierre Gaillard, Laurent Massouli{\'e}, and Adrien Taylor.
\newblock Continuized accelerations of deterministic and stochastic gradient
  descents, and of gossip algorithms.
\newblock {\em Advances in Neural Information Processing Systems},
  34:28054--28066, 2021.

\bibitem{fan2026sharp}
Zexi Fan, Bowen Li, and Jianfeng Lu.
\newblock {Sharp hypocoercive convergence estimates for underdamped Langevin
  dynamics via the modified $ L^2$ method}.
\newblock {\em arXiv preprint arXiv:2604.10068}, 2026.

\bibitem{FournierGuillin}
Nicolas Fournier and Arnaud Guillin.
\newblock On the rate of convergence in wasserstein distance of the empirical
  measure.
\newblock {\em Probability theory and related fields}, 162(3-4):707--738, 2015.

\bibitem{gadat2013spectral}
S{\'e}bastien Gadat and Laurent Miclo.
\newblock {Spectral decompositions and $L^2$-operator norms of toy hypocoercive
  semi-groups}.
\newblock {\em Kinetic and related models}, 6(2):317--372, 2013.

\bibitem{geshkovski2025mathematical}
Borjan Geshkovski, Cyril Letrouit, Yury Polyanskiy, and Philippe Rigollet.
\newblock A mathematical perspective on transformers.
\newblock {\em Bulletin of the American Mathematical Society}, 62(3):427--479,
  2025.

\bibitem{M41}
Nicolas {Gouraud}, Pierre {Le Bris}, Adrien {Majka}, and Pierre
  {Monmarch{\'e}}.
\newblock {HMC and Underdamped Langevin United in the Unadjusted Convex Smooth
  Case}.
\newblock {\em SIAM/ASA Journal on Uncertainty Quantification}, 13(1):278--303,
  2025.

\bibitem{GuillinWuZhang}
Arnaud Guillin, Wei Liu, Liming Wu, and Chaoen Zhang.
\newblock {Uniform Poincaré and logarithmic Sobolev inequalities for mean
  field particle systems}.
\newblock {\em The Annals of Applied Probability}, 32(3):1590 -- 1614, 2022.

\bibitem{guillin2021uniform}
Arnaud Guillin and Pierre Monmarch{\'e}.
\newblock Uniform long-time and propagation of chaos estimates for mean field
  kinetic particles in non-convex landscapes.
\newblock {\em Journal of Statistical Physics}, 185:1--20, 2021.

\bibitem{herau2007short}
Fr{\'e}d{\'e}ric H{\'e}rau.
\newblock Short and long time behavior of the fokker--planck equation in a
  confining potential and applications.
\newblock {\em Journal of Functional Analysis}, 244(1):95--118, 2007.

\bibitem{herau2004isotropic}
Fr{\'e}d{\'e}ric H{\'e}rau and Francis Nier.
\newblock Isotropic hypoellipticity and trend to equilibrium for the
  fokker-planck equation with a high-degree potential.
\newblock {\em Archive for Rational Mechanics and Analysis}, 171(2):151--218,
  2004.

\bibitem{Szpruch}
Kaitong Hu, Zhenjie Ren, David Šiška, and Łukasz Szpruch.
\newblock {Mean-field Langevin dynamics and energy landscape of neural
  networks}.
\newblock {\em Annales de l'Institut Henri Poincaré, Probabilités et
  Statistiques}, 57(4):2043 -- 2065, 2021.

\bibitem{karimi2016linear}
Hamed Karimi, Julie Nutini, and Mark Schmidt.
\newblock Linear convergence of gradient and proximal-gradient methods under
  the polyak-{\l}ojasiewicz condition.
\newblock In {\em Joint European conference on machine learning and knowledge
  discovery in databases}, pages 795--811. Springer, 2016.

\bibitem{lambert2022variational}
Marc Lambert, Sinho Chewi, Francis Bach, Silv{\`e}re Bonnabel, and Philippe
  Rigollet.
\newblock Variational inference via wasserstein gradient flows.
\newblock {\em Advances in Neural Information Processing Systems},
  35:14434--14447, 2022.

\bibitem{lelievre2026convergence}
Tony Leli{\`e}vre, Xuyang Lin, and Pierre Monmarch{\'e}.
\newblock Convergence rates for an adaptive biasing potential scheme from a
  wasserstein optimization perspective.
\newblock {\em Nonlinearity}, 39(4):045016, 2026.

\bibitem{lelievre2016partial}
Tony Lelievre and Gabriel Stoltz.
\newblock Partial differential equations and stochastic methods in
  moleculardynamics.
\newblock {\em Acta Numerica}, 25:681--880, 2016.

\bibitem{law1965ensembles}
Stanislaw Lojasiewicz.
\newblock Ensembles semi-analytiques.
\newblock {\em IHES notes}, page 220, 1965.

\bibitem{Lu}
Jianfeng {Lu}.
\newblock {A sharp hypocoercive entropy decay estimate for underdamped Langevin
  dynamics}.
\newblock {\em arXiv e-prints}, page arXiv:2605.01933, May 2026.

\bibitem{lu2022explicit}
Jianfeng Lu and Lihan Wang.
\newblock On explicit l 2-convergence rate estimate for piecewise deterministic
  markov processes in mcmc algorithms.
\newblock {\em The Annals of Applied Probability}, 32(2):1333--1361, 2022.

\bibitem{mei2018mean}
Song Mei, Andrea Montanari, and Phan-Minh Nguyen.
\newblock A mean field view of the landscape of two-layer neural networks.
\newblock {\em Proceedings of the National Academy of Sciences},
  115(33):E7665--E7671, 2018.

\bibitem{menon2026implicit}
Govind Menon, Austin~J Stromme, and Adrien Vacher.
\newblock On the implicit regularization of langevin dynamics with projected
  noise.
\newblock {\em arXiv preprint arXiv:2602.12257}, 2026.

\bibitem{Monmarche2013}
Laurent Miclo and Pierre Monmarch\'{e}.
\newblock \'{E}tude spectrale minutieuse de processus moins ind\'{e}cis que les
  autres.
\newblock In {\em S\'{e}minaire de {P}robabilit\'{e}s {XLV}}, volume 2078 of
  {\em Lecture Notes in Math.}, pages 459--481. Springer, Cham, 2013.

\bibitem{M24}
Pierre Monmarch{\'e}.
\newblock Piecewise deterministic simulated annealing.
\newblock {\em ALEA Lat. Am. J. Probab. Math. Stat.}, 13(1):357--398, 2016.

\bibitem{M18}
Pierre {Monmarch{\'e}}.
\newblock {Generalized $\Gamma$ calculus and application to interacting
  particles on a graph}.
\newblock {\em Potential Analysis}, 50:439--466, 2019.

\bibitem{idealized}
Pierre Monmarch{\'e}.
\newblock {An entropic approach for Hamiltonian Monte Carlo: The idealized
  case}.
\newblock {\em The Annals of Applied Probability}, 34(2):2243 -- 2293, 2024.

\bibitem{MonmarcheToyModelMF}
Pierre {Monmarch{\'e}}.
\newblock {Free energy Wasserstein gradient flow and their particle
  counterparts: toy model, (degenerate) PL inequalities and exit times}.
\newblock {\em arXiv e-prints}, page arXiv:2510.16506, October 2025.

\bibitem{MonmarcheLSI}
Pierre Monmarch{\'e}.
\newblock Uniform log-sobolev inequalities for mean field particles beyond
  flat-convexity.
\newblock {\em Stochastic Processes and their Applications}, 2025.

\bibitem{monmarche2025local}
Pierre Monmarch{\'e} and Julien Reygner.
\newblock Local convergence rates for wasserstein gradient flows and
  mckean-vlasov equations with multiple stationary solutions.
\newblock {\em Probability Theory and Related Fields}, pages 1--59, 2025.

\bibitem{M26}
Pierre {Monmarch{\'e}}, Matthias {Rousset}, and Pierre-Andr{\'e} {Zitt}.
\newblock Exact targeting of gibbs distributions using velocity-jump processes.
\newblock {\em Stochastics and Partial Differential Equations: Analysis and
  Computations}, pages 1--40, 2022.

\bibitem{MONMARCHE20171721}
Pierre Monmarché.
\newblock Long-time behaviour and propagation of chaos for mean field kinetic
  particles.
\newblock {\em Stochastic Processes and their Applications}, 127(6):1721--1737,
  2017.

\bibitem{nesterov1983method}
Yurii Nesterov.
\newblock A method for solving the convex programming problem with convergence
  rate o (1/k2).
\newblock In {\em Dokl akad nauk Sssr}, volume 269, page 543, 1983.

\bibitem{otto2001geometry}
Felix Otto.
\newblock The geometry of dissipative evolution equations: the porous medium
  equation.
\newblock 2001.

\bibitem{PetersdeWith}
E.~A. J.~F. Peters and G.~de~With.
\newblock Rejection-free monte carlo sampling for general potentials.
\newblock {\em Phys. Rev. E 85, 026703}, 2012.

\bibitem{peyre2015entropic}
Gabriel Peyr{\'e}.
\newblock Entropic approximation of wasserstein gradient flows.
\newblock {\em SIAM Journal on Imaging Sciences}, 8(4):2323--2351, 2015.

\bibitem{polyak1964some}
Boris~T Polyak.
\newblock Some methods of speeding up the convergence of iteration methods.
\newblock {\em Ussr computational mathematics and mathematical physics},
  4(5):1--17, 1964.

\bibitem{sandier2004gamma}
Etienne Sandier and Sylvia Serfaty.
\newblock Gamma-convergence of gradient flows with applications to
  ginzburg-landau.
\newblock {\em Communications on Pure and Applied Mathematics: A Journal Issued
  by the Courant Institute of Mathematical Sciences}, 57(12):1627--1672, 2004.

\bibitem{shi2022understanding}
Bin Shi, Simon~S Du, Michael~I Jordan, and Weijie~J Su.
\newblock Understanding the acceleration phenomenon via high-resolution
  differential equations.
\newblock {\em Mathematical Programming}, 195(1):79--148, 2022.

\bibitem{su2016differential}
Weijie Su, Stephen Boyd, and Emmanuel~J Candes.
\newblock A differential equation for modeling nesterov's accelerated gradient
  method: Theory and insights.
\newblock {\em Journal of Machine Learning Research}, 17(153):1--43, 2016.

\bibitem{tse2019quantitative}
Alvin Tsz~Ho Tse.
\newblock Quantitative propagation of chaos of mckean-vlasov equations via the
  master equation.
\newblock 2019.

\bibitem{Villani}
C{\'e}dric Villani.
\newblock Hypocoercivity.
\newblock {\em Mem. Amer. Math. Soc.}, 202(950):iv+141, 2009.

\bibitem{SongboLSI}
Songbo {Wang}.
\newblock {Uniform log-Sobolev inequalities for mean field particles with
  flat-convex energy}.
\newblock {\em arXiv e-prints}, page arXiv:2408.03283, August 2024.

\bibitem{wang2025large}
Songbo Wang.
\newblock Large-scale concentration and relaxation for mean-field langevin
  particle systems.
\newblock {\em arXiv preprint arXiv:2508.16428}, 2025.

\bibitem{wang2022accelerated}
Yifei Wang and Wuchen Li.
\newblock Accelerated information gradient flow.
\newblock {\em Journal of Scientific Computing}, 90(1):11, 2022.

\bibitem{wilson2021lyapunov}
Ashia~C Wilson, Ben Recht, and Michael~I Jordan.
\newblock A lyapunov analysis of accelerated methods in optimization.
\newblock {\em Journal of Machine Learning Research}, 22(113):1--34, 2021.

\end{thebibliography}

\end{document}